\newcommand{\Expect}[1]{\mbox{}{\bf{E}}\left[#1\right]}
\newcommand{\FNorm }[1]{\mbox{}\|#1\|_F  }
\newcommand{\FsNorm }[1]{\mbox{}\|#1\|_F  }
\newcommand{\FNormS}[1]{\mbox{}\|#1\|_F^2}
\newcommand{\TNorm}[1]{\mbox{}\|#1\|_2}
\newcommand{\TsNorm}[1]{\mbox{}\|#1\|_2}
\newcommand{\TNormS}[1]{\mbox{}\|#1\|_2^2}
\newcommand{\abs }[1]{\left|#1\right|}
\def\math#1{$#1$}
\def\frac#1#2{{#1\over #2}}
\newcommand{\remove}[1]{}
\newcommand{\bA}{\mathbf{A}}
\newcommand{\bV}{\mathbf{V}}
\newcommand{\bv}{\mathbf{v}}
\newcommand{\bU}{\mathbf{U}}
\newcommand{\bu}{\mathbf{u}}
\newcommand{\bQ}{\mathbf{Q}}
\newcommand{\bW}{\mathbf{W}}
\newcommand{\bX}{\mathbf{X}}
\newcommand{\bY}{\mathbf{Y}}
\newcommand{\by}{\mathbf{y}}
\newcommand{\bZ}{\mathbf{Z}}
\newcommand{\bI}{\mathbf{I}}
\newcommand{\bB}{\mathbf{B}}
\newcommand{\bK}{\mathbf{K}}
\newcommand{\bR}{\mathbf{R}}
\newcommand{\bT}{\mathbf{T}}
\newcommand{\bSigma}{\mathbf{\Sigma}}
\newcommand{\bPhi}{\mathbf{\Phi}}
\newcommand{\bPsi}{\mathbf{\Psi}}
\newcommand{\bTheta}{\mathbf{\Theta}}
\newcommand{\bzero}{\mathbf{0}}
\newcommand{\real}{\mathbb{R}}
\newcommand{\range}{\mathrm{range}}
\newcommand{\trace}{\mathrm{trace}}
\title{Structural Convergence Results for \\Approximation of Dominant Subspaces from\\ Block
Krylov Spaces\thanks{The work of the first and third authors was supported in part
by NSF grants IIS-1302231 and NSF IIS-1447283. The work of the second author was
supported in part by the XDATA Program of the Defense Advanced
Research Projects Agency (DARPA), administered through Air Force
Research Laboratory contract FA8750-12-C-0323.
The work of the fourth author was partially supported by NSF grant IIS 1124827, and by the Army Research Laboratory under Cooperative Agreement Number W911NF-09-2-0053 (the ARL Network Science CTA). The views and conclusions contained in this document are those of the authors and should not be interpreted as representing the official policies, either expressed or implied, of the Army Research Laboratory or the U.S. Government. The U.S. Government is authorized to reproduce and distribute reprints for Government purposes notwithstanding any copyright notation here on.}}
\author{Petros Drineas\thanks{Department of Computer Science,
Purdue University,
West Lafayette, IN, pdrineas@purdue.edu}
\and
Ilse C. F. Ipsen\thanks{Department of Mathematics, North Carolina State University, Raleigh, NC, ipsen@ncsu.edu}
\and
Eugenia-Maria Kontopoulou\thanks{Department of Computer Science,
Purdue University, West Lafayette, IN, ekontopo@purdue.edu}
\and
Malik Magdon-Ismail\thanks{Department of Computer Science,
Rensselaer Polytechnic Institute,
Troy, NY, magdon@cs.rpi.edu}
}
\begin{document}
\maketitle

\begin{abstract}
This paper is concerned with
approximating the dominant left singular vector space of a real matrix $\bA$ of arbitrary dimension,
from block Krylov spaces generated by the matrix $\bA\bA^T$ and the block vector $\bA\bX$.
Two classes of results are presented. First are bounds on the distance, in the two and Frobenius norms,
between the Krylov space and the target space. The distance is expressed in terms
of principal angles. Second are quality of approximation bounds, relative
to the best approximation in the Frobenius norm.
For starting guesses $\bX$ of full column-rank,
the bounds depend on the tangent of the principal angles between $\bX$ and the
dominant right singular vector space of $\bA$.
The results presented here form the structural foundation for
the analysis of randomized Krylov space methods. The innovative feature is a combination of
traditional Lanczos convergence analysis with optimal
approximations via least squares problems.
\end{abstract}

\begin{keywords}
Singular value decomposition, least squares, principal angles, gap-amplifying polynomials,
random matrices.
\end{keywords}

\begin{AM}
15A18, 15A42, 65F15, 65F20, 68W20.
\end{AM}

\section{Introduction}\label{s_intro}
Randomized methods for low-rank approximations from Kry\-lov spaces
are starting to emerge in the Theoretical Computer Science community \cite{MM2015,Wang2015}.
This motivated us to produce  a ``proof of concept'' for the approximation of dominant subspaces
from Krylov spaces.

\subsubsection*{Low-rank versus subspace approximations}
Our focus is the approximation of a dominant subspace of $\bA\in\real^{m\times n}$. This is a different
and harder problem than a low-rank approximation of~$\bA$.
To wit, the objective of a low-rank approximation is a matrix $\bZ$ with orthonormal
columns that makes $\|\bA-\bZ\bZ^T\bA\|$ small in some unitarily invariant norm \cite{HMT09_SIREV,TCS-060}.
In contrast, a subspace approximation aims at a space $\mathcal{K}$ that has a small angle with the
dominant target space, which is the space
spanned by the singular vectors associated with the top $k$ left singular vectors of $\bA$.

For a dominant subspace to be  well-defined, the
top $k$ singular values must be separated by a gap from the remaining singular values of $\bA$.
In contrast, a low-rank approximation can do without a singular value gap.
Accuracy results for dominant subspace computations are automatically informative for low-rank
approximations, but not vice versa. It is in this sense that dominant subspace approximations are harder.

\subsubsection*{This paper}
We consider block Krylov space methods for computing dominant left
singular vector spaces of general rectangular matrices and
we present structural, deterministic bounds  on
the quality of the subspaces, for essentially general starting guesses.
The innovative feature is a  fusion of eigenvalue and singular value technology:
We combine a traditional Lanczos convergence analysis~\cite{Saa11} with optimal
approximations via least squares problems \cite{BDM2011,BDM2014}.

Our long-term goal is to put randomized Krylov space approximations on a firm numerical footing.
However, at this  preliminary first step, we make a few idealized assumptions:
\begin{enumerate}
\item The block Krylov spaces have maximal dimension.
\item The analysis assumes exact arithmetic and does not address the implementation of
numerically stable recursions.
\end{enumerate}
Future work will need to deal with the challenging issues of finite precision arithmetic
and viable numerical implementations, including recursions,
numerical stability, maintaining orthogonality, deflation, adaptation of block size, and restarting.
Empirical evaluations will have to assess whether
the bounds are tight enough to be informative in practice.

\subsubsection*{Overview}
We start with a brief summary of our contributions (Section~\ref{s_results}), followed by
a comparison to existing work (Section~\ref{s_lit}). Auxiliary results (Section~\ref{s_aux})
set the stage for the proof of the main Theorems
(Sections \ref{sxn:prooft2}, \ref{s_prooft3}, \ref{s_prooft4}, and Appendix~\ref{s_prooft2a}).
We end the main part of the paper with a perspective on open problems  (Section~\ref{s_conc}).

\section{Results}\label{s_results}
After setting the context (Section~\ref{s_setting}), we give a brief summary of our bounds for:
The distance between the Krylov space and  the dominant left singular space (Section~\ref{s_angle});
a particular dominant subspace approximation from the Krylov space (Section~\ref{s_lowrank}); and
the polynomials appearing in the approximation (Section~\ref{sxn:randnlabounds}).
We end this section with a discussion of options
for bounding the distance between the initial guess and the dominant right singular
vector space (Section~\ref{s_x}).

\subsection{Setting}\label{s_setting}
To approximate the dominant left singular vector subspace
of a matrix $\bA \in \real^{m \times n}$, given a starting
guess $\bX\in\real^{n\times s}$, we construct
the Krylov space in\footnote{The superscript $T$ denotes the transpose, and $\|\cdot\|_2$ the two norm.}
$\bA\bA^T$ and $\bA\bX$,
\begin{equation}\label{eqn:KrylovSpace}
\mathcal{K}_q\equiv \mathcal{K}_q(\bA\bA^T,\bA\bX)=
\range\begin{pmatrix}\bA\bX  &(\bA\bA^T)\bA\bX & \cdots & (\bA\bA^T)^q\bA\bX\end{pmatrix}.
\end{equation}
We assume maximal dimension, $\dim(\mathcal{K}_q)= (q+1)s$.

In contrast to \cite{Baglama2005, Baglama2006}, the matrix $\bA$ occurs not only
in the powers $\bA\bA^T$
but also has a direct effect on the starting guess through $\bA\bX$. Furthermore, $\bX$ is
not required to have orthonormal columns and, at times, not even linearly independent columns.

Let $\bA=\bU\bSigma\bV^T$ be the \textit{full} SVD of $\bA$,
so that $\bSigma\in\real^{m\times n}$, and $\bU\in\real^{m\times m}$ and
$\bV\in\real^{n\times n}$ are orthogonal matrices.
For a positive integer $1\leq k<\rank(\bA)$, identify the dominant spaces by partitioning
$$\bSigma=\begin{pmatrix} \bSigma_k & \\ & \bSigma_{k,\perp}\end{pmatrix}, \qquad
\bU=\begin{pmatrix}\bU_k & \bU_{k,\perp}\end{pmatrix}, \qquad
\bV=\begin{pmatrix}\bV_k & \bV_{k,\perp}\end{pmatrix},$$
where the diagonal matrix $\bSigma_k$ contains the $k$ largest singular values,
hence is nonsingular. For the dominant  subspaces to be well-defined,
the dominant $k$ singular values of~$\bA$ must be strictly larger than the
remaining ones, $1/\|\bSigma_k^{-1}\|_2>\|\bSigma_{k,\perp}\|_2>0$.

\subsection{Krylov space angles}\label{s_angle}
We present
bounds for the distance between the Krylov space $\mathcal{K}_q$ and the
dominant left singular vector space $\range (\bU_k)$. Theorem~\ref{t_2} bounds the distance
between $\mathcal{K}_q$ and the whole space, while
Theorem~\ref{t_3} bounds the distance between $\mathcal{K}_q$ and
an individual left singular vector.
The distances are represented in terms of principal angles.

Theorem~\ref{t_2} below is in the spirit of Rayleigh-Ritz bounds \cite{BDR09,Dr96}.
It indicates how well the Krylov space $\mathcal{K}_q$ captures the targeted dominant
left singular vector space $\range(\bU_k)$ in both the two norm and the Frobenius norm.
Denote by $\bTheta(\mathcal{K}_q,\bU_k)\in \real^{k\times k}$ the diagonal matrix
of principal angles between $\mathcal{K}_q$ and $\range(\bU_k)$,
and by $\bTheta(\bX,\bV_k)\in\real^{k\times k}$ the diagonal matrix of the
principal angles between $\range(\bX)$ and $\range(\bV_k)$.  Principal angles are discussed
in detail  in Section~\ref{sxn:angles}.

\begin{theorem}\label{t_2}
Let $\phi(x)$ be a polynomial of degree~$2q+1$ with odd powers only, such that
$\phi(\bSigma_k)$ is nonsingular. If $\rank(\bV_k^T\bX)=k$, then\footnote{The subscript $F$ denotes the
Frobenius norm.}
\begin{eqnarray*}
\|\sin{\bTheta(\mathcal{K}_q,\bU_k)}\|_{2,F}&\leq& \|\phi(\bSigma_{k,\perp})\|_2\,\|\phi(\bSigma_k)^{-1}\|_2\>\|\bV_{k,\perp}^T\bX(\bV_{k}^T\bX)^{\dagger}\|_{2,F}.
\end{eqnarray*}
If, in addition, $\bX$ has orthornomal or linearly independent columns, then
$$\|\bV_{k,\perp}^T\bX(\bV_{k}^T\bX)^{\dagger}\|_{2,F}=\|\tan{\bTheta(\bX,\bV_k)}\|_{2,F}$$
and
\begin{eqnarray*}
\|\sin{\bTheta(\mathcal{K}_q,\bU_k)}\|_{2,F}&\leq& \|\phi(\bSigma_{k,\perp})\|_2\,
\|\phi(\bSigma_k)^{-1}\|_2\>\|\tan{\bTheta(\bX,\bV_k)}\|_{2,F}.
\end{eqnarray*}
\end{theorem}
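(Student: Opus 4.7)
My plan is to exhibit a concrete matrix $\bN\in\real^{m\times k}$ whose columns all lie in $\mathcal{K}_q$ and which is close to $\bU_k$, then invoke the best-approximation property of the orthogonal projector onto $\mathcal{K}_q$. First, using the SVD I would set $\phi(\bA):=\bU\phi(\bSigma)\bV^T$. Because $\phi(x)=\sum_{j=0}^{q}c_{2j+1}x^{2j+1}$ contains only odd powers, this coincides with $\sum_{j=0}^{q}c_{2j+1}\bA(\bA^T\bA)^j$, so
$$\phi(\bA)\bX\;=\;\sum_{j=0}^{q}c_{2j+1}(\bA\bA^T)^j\bA\bX$$
has columns in $\mathcal{K}_q$; the same is true of $\phi(\bA)\bX\bC$ for any compatible $\bC$.

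Next, the hypothesis $\rank(\bV_k^T\bX)=k$ makes the $k\times s$ matrix $\bV_k^T\bX$ of full row rank, so the Moore--Penrose inverse acts as a right inverse, $(\bV_k^T\bX)(\bV_k^T\bX)^{\dagger}=\bI_k$. Choosing $\bC:=(\bV_k^T\bX)^{\dagger}\phi(\bSigma_k)^{-1}$ and expanding $\phi(\bA)\bX$ through the SVD block partition gives
$$\bN\;:=\;\phi(\bA)\bX\bC\;=\;\bU_k+\bU_{k,\perp}\phi(\bSigma_{k,\perp})\bV_{k,\perp}^T\bX(\bV_k^T\bX)^{\dagger}\phi(\bSigma_k)^{-1}.$$
Let $\bP_q$ denote the orthogonal projector onto $\mathcal{K}_q$. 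Standard principal-angle identities yield $\|\sin\bTheta(\mathcal{K}_q,\bU_k)\|_{2,F}=\|(\bI-\bP_q)\bU_k\|_{2,F}$, and since $\bP_q\bN=\bN$, $\|\sin\bTheta(\mathcal{K}_q,\bU_k)\|_{2,F}\le\|\bU_k-\bN\|_{2,F}$. Because $\bU_{k,\perp}$ has orthonormal columns, this equals the norm of $\phi(\bSigma_{k,\perp})\bV_{k,\perp}^T\bX(\bV_k^T\bX)^{\dagger}\phi(\bSigma_k)^{-1}$, and submultiplicativity $\|\bA\bB\bC\|_{2,F}\le\|\bA\|_2\|\bB\|_{2,F}\|\bC\|_2$ then produces the first inequality.

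For the tangent version, I would pass to an orthonormal basis. When $\bX$ itself has orthonormal columns, $\bV^T\bX$ does too, so the CS decomposition of the partition $(\bV_k^T\bX;\,\bV_{k,\perp}^T\bX)$ produces orthogonal/orthonormal factors $\bW_1,\bW_2,\bW_3$ with $\bV_k^T\bX=\bW_1\cos\bTheta(\bX,\bV_k)\bW_2^T$ and $\bV_{k,\perp}^T\bX\bW_2$ having sine-diagonal factor $\sin\bTheta(\bX,\bV_k)$; a short manipulation then yields $\bV_{k,\perp}^T\bX(\bV_k^T\bX)^{\dagger}=\bW_3\tan\bTheta(\bX,\bV_k)\bW_1^T$, which gives the stated identity. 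For $\bX$ merely of full column rank, its QR factor $\bQ_X$ satisfies $\bA\bX\bR_X^{-1}=\bA\bQ_X$, leaving $\mathcal{K}_q$ unchanged, so the first inequality applied to $\bQ_X$ together with $\bTheta(\bX,\bV_k)=\bTheta(\bQ_X,\bV_k)$ delivers the tangent bound. The main obstacle is the CS-decomposition bookkeeping in the case $s>k$, where the block dimensions must be tracked carefully so as to invert the cosine block cleanly; everything else reduces to standard SVD-partition and norm manipulations.
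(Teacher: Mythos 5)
Your proof is correct, and at its core it coincides with the paper's: your candidate $\bN=\phi(\bA)\bX(\bV_k^T\bX)^{\dagger}\phi(\bSigma_k)^{-1}$ is exactly the suboptimal least-squares iterate $\bPhi\,(\bPhi_k^{\dagger}\bU_k)$ that the paper substitutes after framing $\|(\bI-\mathcal{P}_q)\bU_k\|_{2,F}$ as a least-squares residual, and the subsequent block expansion and submultiplicativity steps are identical. The differences are in the packaging, and they are genuine simplifications: bounding $\|(\bI-\mathcal{P}_q)\bU_k\|_{2,F}\le\|\bU_k-\bN\|_{2,F}$ directly via nonexpansiveness of the orthogonal projector lets you bypass the two-norm least-squares machinery of Lemma~\ref{lem:2norm}; you obtain the tangent identity for orthonormal $\bX$ from the CS decomposition instead of citing Lemma~\ref{l_2}, and your bookkeeping concern for $s>k$ does resolve cleanly, since the ``extra'' unit-sine directions are annihilated by the zero rows of the pseudoinverted cosine block; and for merely full-column-rank $\bX$ you reduce to the orthonormal case by QR, noting that $\mathcal{K}_q$ and the principal angles are unchanged under the nonsingular factor $\bR_X$, which is simpler than the appendix's argument with a $(1,2,3)$-inverse. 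One caveat: your reduction delivers the final tangent bound but not the intermediate equality $\|\bV_{k,\perp}^T\bX(\bV_k^T\bX)^{\dagger}\|_{2,F}=\|\tan\bTheta(\bX,\bV_k)\|_{2,F}$ for non-orthonormal $\bX$ (the Moore--Penrose inverse does not simply absorb $\bR_X$); the paper's own appendix likewise proves only the tangent bound in that case, so this is an imprecision in the theorem's phrasing rather than a gap in your argument.
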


\begin{proof} See Section~\ref{sxn:prooft2} for general and orthonormal $\bX$;
and Appendix~\ref{s_prooft2a} for $\bX$ with linearly independent columns.
\end{proof}
\medskip

Theorem~\ref{t_2} is reminiscent of the eigenvalue bounds  \cite[(2.18)]{Knya87}
which contain a tangent on the left.
The term $\|\bV_{k,\perp}^T\bX(\bV_{k}^T\bX)^{\dagger}\|_{2,F}$ already appeared
in previous analyses of randomized algorithms
\cite{Drineas2016,Drineas2008,Drineas2011,MahoneyD16}, and bounds for it are
discussed in Section~\ref{s_x}.
If $\bX$ is a random starting guess, such as
a random sign matrix, a random Gaussian matrix\footnote{The elements of a random Gaussian matrix
are independent identically distributed normal random variables with mean zero and
variance one.} or a matrix with randomly chosen orthonormal columns, then
 state-of-the-art matrix concentration inequalities can be called upon.

In the special case where $\bX$ has
linearly independent columns the bounds admit a geometric interpretation:
They depend on the tangents of angles
between $\range(\bX)$ and the dominant right singular vector space $\range(\bV_k)$.
The full-rank assumption for $\bV_k^T\bX$ means that the spaces
$\range(\bV_k)$ and $\range(\bX)$ are sufficiently close, with all principal angles
being less than $\pi/2$.

Next, Theorem~\ref{t_3} bounds the distances between $\mathcal{K}_q$ and
individual left singular vectors of $\bA$. To this end,
distinguish the $k$ dominant singular values and associated left singular vectors,
$$\bSigma_k=\diag\begin{pmatrix}\sigma_1& \cdots & \sigma_k\end{pmatrix},
\qquad \bU_k=\begin{pmatrix} \bu_1 & \cdots & \bu_k\end{pmatrix}.$$
\begin{theorem}\label{t_3}
Let $\phi(x)$ be a polynomial of degree~$2q+1$ with odd powers only, such that
$\phi(\bSigma_k)$ is nonsingular. If $\rank(\bV_k^T\bX)=k$, then
\begin{eqnarray*}
|\sin{\bTheta(\mathcal{K}_q,\bu_i)}|&\leq& \frac{\|\phi(\bSigma_{k,\perp})\|_2}{|\phi(\sigma_i)|}\>\|\bV_{k,\perp}^T\bX(\bV_{k}^T\bX)^{\dagger}\|_{2},
\qquad 1\leq i\leq k.
\end{eqnarray*}
If, in addition, $\bX$ has orthonormal columns, then
\begin{eqnarray*}
|\sin{\bTheta(\mathcal{K}_q,\bu_i)}|&\leq& \frac{\|\phi(\bSigma_{k,\perp})\|_2}{|\phi(\sigma_i)|}\>\|\tan{\bTheta(\bX,\bV_k)}\|_{2}, \qquad 1\leq i\leq k.
\end{eqnarray*}
\end{theorem}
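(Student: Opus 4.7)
The plan is to mirror the strategy underlying Theorem~\ref{t_2}, but refined so that the candidate approximation targets the single unit vector $\bu_i$ rather than the entire subspace $\range(\bU_k)$. Since $\bu_i$ has unit two-norm, the principal angle $\bTheta(\mathcal{K}_q,\bu_i)$ is a scalar and admits the elementary variational characterization
$$\sin\bTheta(\mathcal{K}_q,\bu_i)\;=\;\min_{\bv\in\mathcal{K}_q}\|\bu_i-\bv\|_2,$$
so it suffices to display one well-chosen $\bv\in\mathcal{K}_q$ whose two-norm distance to $\bu_i$ is controlled by the right-hand side.

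First I would parameterize $\mathcal{K}_q$ through odd polynomials: because $\phi$ has degree $2q+1$ with odd powers only, one can write $\phi(\bA)=\sum_{j=0}^{q} c_j (\bA\bA^T)^j \bA$, so every column of $\phi(\bA)\bX\bw$ lies in $\mathcal{K}_q$ for any $\bw\in\real^s$. Via the SVD, $\phi(\bA)=\bU\phi(\bSigma)\bV^T$ with $\phi(\bSigma)$ diagonal carrying entries $\phi(\sigma_\ell)$. I would then propose
$$\bv\;=\;\frac{1}{\phi(\sigma_i)}\,\phi(\bA)\,\bX\,(\bV_k^T\bX)^{\dagger}\,\mathbf{e}_i,\qquad \mathbf{e}_i\in\real^k,$$
where $\mathbf{e}_i$ denotes the $i$-th standard basis vector. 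The normalization is legitimate because $\phi(\bSigma_k)$ is nonsingular, hence $\phi(\sigma_i)\neq 0$, and the whole expression sits in $\mathcal{K}_q$ by the remark above.

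The technical heart of the argument is to show that $\bu_i-\bv$ collapses to a residual analogous to the one driving Theorem~\ref{t_2}. Splitting $\bV^T\bX$ block-wise as $\left(\begin{smallmatrix}\bV_k^T\bX\\ \bV_{k,\perp}^T\bX\end{smallmatrix}\right)$ and using the right-inverse identity $\bV_k^T\bX\,(\bV_k^T\bX)^{\dagger}=\bI_k$, which is valid because $\rank(\bV_k^T\bX)=k$, the top block of $\phi(\bSigma)\bV^T\bX(\bV_k^T\bX)^{\dagger}\mathbf{e}_i$ collapses to $\phi(\sigma_i)\mathbf{e}_i$, while the bottom block becomes $\phi(\bSigma_{k,\perp})\bV_{k,\perp}^T\bX(\bV_k^T\bX)^{\dagger}\mathbf{e}_i$. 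After left-multiplication by $\bU$ and division by $\phi(\sigma_i)$, the dominant contribution cancels exactly against $\bu_i=\bU_k\mathbf{e}_i$, leaving
$$\bu_i-\bv\;=\;-\frac{1}{\phi(\sigma_i)}\,\bU_{k,\perp}\,\phi(\bSigma_{k,\perp})\,\bV_{k,\perp}^T\bX\,(\bV_k^T\bX)^{\dagger}\,\mathbf{e}_i.$$
Taking two-norms, using the isometry of $\bU_{k,\perp}$, and applying submultiplicativity together with $\|\mathbf{e}_i\|_2=1$ yields the first asserted inequality.

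For the second inequality I would simply invoke the identity $\|\bV_{k,\perp}^T\bX(\bV_k^T\bX)^{\dagger}\|_2=\|\tan\bTheta(\bX,\bV_k)\|_2$ already established in Theorem~\ref{t_2} under the orthonormal-columns hypothesis. The only genuinely clever step is the choice of the vector $(\bV_k^T\bX)^{\dagger}\mathbf{e}_i/\phi(\sigma_i)$: it is engineered precisely so that the dominant singular-subspace contribution collapses to $\bu_i$ and only the perpendicular leakage remains; the remaining work is routine norm bookkeeping, so no substantial obstacle arises beyond identifying this right combination.
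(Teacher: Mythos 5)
Your proof is correct and follows essentially the same route as the paper: your test vector $\tfrac{1}{\phi(\sigma_i)}\,\bPhi\,(\bV_k^T\bX)^{\dagger}\mathbf{e}_i$ is exactly the paper's suboptimal least-squares comparison point $\bPhi\,\bPhi_k^{\dagger}\bu_i$, so your variational argument $|\sin\bTheta(\mathcal{K}_q,\bu_i)|\leq\|\bu_i-\bv\|_2$ reproduces the paper's chain $\|(\bI-\bPhi\bPhi^{\dagger})\bu_i\|_2\leq\|\bPhi_{k,\perp}\bPhi_k^{\dagger}\bu_i\|_2$, followed by the same submultiplicativity step and the same appeal to Lemma~\ref{l_2} for the orthonormal case.
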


\begin{proof}
See Section~\ref{s_prooft3}.
\end{proof}
\medskip

In the special case when $\bX$ has orthonormal columns,
the angle between a \textit{single} left singular vector and $\mathcal{K}_q$
is bounded by \textit{all} angles between $\bX$ and  the right singular
vector space $\range(\bV_k)$.

\subsection{Approximations from a Krylov space}\label{s_lowrank}
The results here are motivated by work in the Theoretical Computer Science community on
Randomized Linear Algebra \cite{Drineas2016}.
There, a common objective is the best rank-$k$ approximation to $\bA$
with respect to a unitarily invariant norm,
$$\bA_k \equiv \bU_k \bSigma_k \bV_k^T.$$
The particular approximation $\hat{\bU}_k$ computed
by Proto-Algorithm~\ref{alg:BL} guarantees a strong optimality property
in the projection $\hat{\bU}_k\hat{\bU}_k^T\bA$:
It is the best rank-$k$ approximation to $\bA$ from $\mathcal{K}_q$ with
respect to the Frobenius norm (see Lemma~\ref{lem:restate}).

\begin{algorithm}
\centerline{
\caption{Proto-algorithm for a low-rank approximation of $\bA$ from $\mathcal{K}_q$}\label{alg:BL}
}
\begin{algorithmic}[1]
\REQUIRE $\bA \in \real^{m \times n}$, starting guess $\bX \in \real^{n \times s}$\\
$\quad~$ Target rank $k<\rank(\bA)$, provided $\sigma_{k}>\sigma_{k+1}$\\
$\quad~$ Block dimension $q\geq 1$ with $k\leq (q+1)s\leq m$
\ENSURE $\hat{\bU}_k \in \real^{m \times k}$ with orthonormal columns
\STATE Set
$\bK_q=\begin{pmatrix}\bA\bX  &(\bA\bA^T)\bA\bX & \cdots & (\bA\bA^T)^q\bA\bX
\end{pmatrix}\in\real^{m\times (q+1)s}$,\\
and assume that $\rank(\bK_q)=(q+1)s$.
\STATE Compute an orthonormal basis $\bU_K\in\real^{m\times (q+1)s}$
for $\range(\bK_q)$.
\STATE Set $\bW\equiv \bU_K^T\bA\in\real^{(q+1)s\times n}$,
 and assume $\rank(\bW)\geq k$.
\STATE Compute an orthonormal basis $\bU_{W,k}\in\real^{(q+1)s\times k}$ for the $k$
dominant left singular vectors of $\bW$.
\STATE Return $\hat{\bU}_k = \bU_K \bU_{W,k}\in\real^{m\times k}$.
\end{algorithmic}
\end{algorithm}

Theorem~\ref{t_4} presents a quality-of-approximation result for $\hat{\bU}_k$.
To this end we distinguish the orthonormal columns of
$\hat{\bU}_k=\begin{pmatrix}\hat{\bu}_1 &\ldots& \hat{\bu}_k \end{pmatrix} \in\real^{m \times k}$
and set
\begin{eqnarray}\label{e_hatui}
\hat{\bU}_i\equiv \begin{pmatrix}\hat{\bu}_1 &\ldots& \hat{\bu}_i \end{pmatrix}
\in\real^{m\times i},\qquad 1\leq i\leq k,
\end{eqnarray}
and
$$\Delta \equiv \|\phi(\bSigma_{k,\perp})\|_2\>
\|\bV_{k,\perp}^T\bX\,(\bV_{k}^T\bX)^{\dagger}\|_{F}.$$

\begin{theorem}\label{t_4}
Let $\phi(x)$ be a polynomial of degree~$2q+1$ with odd powers only, such that
$\phi(\bSigma_k)$ is nonsingular,
and $\phi(\sigma_i)\geq \sigma_i$ for $1\leq i\leq k$.
If $\rank(\bV_k^T\bX)=k$, then for $1\leq i\leq k$,
\begin{eqnarray}
\FsNorm{\bA-{\hat\bU_i}{\hat\bU_i}^T\bA}&\leq&
\FsNorm{\bA-\bA_{i}}+\Delta\label{eqn:th1}\\
\TsNorm{\bA-{\hat\bU_i}{\hat\bU_i}^T\bA}&\leq&
\TsNorm{\bA-\bA_{i}}+\Delta\label{eqn:th2}\\
\sigma_i-\Delta &\leq& \TsNorm{\hat\bu_i^T\bA}\leq \sigma_i.\label{eqn:th3}
\end{eqnarray}
If, in addition, $\bX$ has orthonormal columns, then
$$\Delta=\|\phi(\bSigma_{k,\perp})\|_2\>\|\tan{\bTheta(\bX,\bV_k)}\|_{F}.$$
\end{theorem}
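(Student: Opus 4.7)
The plan is to compare $\hat\bU_i\hat\bU_i^T\bA$ against an explicit rank-$i$ trial built inside $\mathcal{K}_q$. Since $\phi$ contains only odd powers, every column of $\phi(\bA)\bX$ lies in $\mathcal{K}_q$, so under the assumption $\rank(\bV_k^T\bX)=k$ the matrix
$$\bZ\;\equiv\;\phi(\bA)\bX(\bV_k^T\bX)^{\dagger}\;=\;\bU_k\,\phi(\bSigma_k) + \bU_{k,\perp}\,\phi(\bSigma_{k,\perp})\,\mathbf{F},\qquad \mathbf{F}\;\equiv\;\bV_{k,\perp}^T\bX(\bV_k^T\bX)^{\dagger},$$
has range inside $\mathcal{K}_q$. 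Let $\bZ_i$ be its first $i$ columns and set $\bY \equiv \phi(\bSigma_i)^{-1}\bSigma_i\bV_i^T$. A direct expansion yields $\bA - \bZ_i\bY = (\bA - \bA_i) - \mathbf{E}$, where $\mathbf{E} \equiv \bU_{k,\perp}\phi(\bSigma_{k,\perp})\mathbf{F}_i\,\phi(\bSigma_i)^{-1}\bSigma_i\bV_i^T$ has row space in $\range(\bV_i)$ while $\bA-\bA_i$ has row space in $\range(\bV_{i,\perp})$; in particular, $(\bA-\bA_i)\mathbf{E}^T=0$.

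This row-space orthogonality delivers $\|\bA-\bZ_i\bY\|_\xi^2 \leq \|\bA-\bA_i\|_\xi^2 + \|\mathbf{E}\|_\xi^2$ for $\xi\in\{2,F\}$. Together with submultiplicativity, the hypothesis $\phi(\sigma_j)\geq\sigma_j$ (so that $\|\phi(\bSigma_i)^{-1}\bSigma_i\|_2 \leq 1$), and $\|\mathbf{F}_i\|_F \leq \|\mathbf{F}\|_F$, this gives $\|\mathbf{E}\|_\xi \leq \Delta$, and hence
$$\|\bA-\bZ_i\bY\|_\xi \;\leq\; \|\bA-\bA_i\|_\xi + \Delta.$$

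The Frobenius bound (\ref{eqn:th1}) now closes via Lemma~\ref{lem:restate}: $\hat\bU_i\hat\bU_i^T\bA$ is the Frobenius-best rank-$i$ approximation to $\bA$ from $\mathcal{K}_q$, and since $\bZ_i\bY$ is rank-$\leq i$ with column space in $\mathcal{K}_q$, we get $\|\bA-\hat\bU_i\hat\bU_i^T\bA\|_F \leq \|\bA-\bZ_i\bY\|_F$. For the two-norm bound (\ref{eqn:th2}), I invoke the identity $\hat\bU_i\hat\bU_i^T\bA = (\bU_K\bU_K^T\bA)_i$ (from Step~4 of Algorithm~\ref{alg:BL}) together with the column-space-orthogonal splitting
$$\bA - \hat\bU_i\hat\bU_i^T\bA \;=\; (\bI-\bU_K\bU_K^T)\bA \;+\; \bigl(\bU_K\bU_K^T\bA - (\bU_K\bU_K^T\bA)_i\bigr);$$
the first summand is controlled via $(\bI-\bU_K\bU_K^T)(\bA-\bZ_i\bY) = (\bI-\bU_K\bU_K^T)\bA$ (since $\bZ_i\bY\in\range(\bU_K)$), and the second via Eckart-Young on $\bW\equiv\bU_K^T\bA$, giving $\sigma_{i+1}(\bW)\leq\sigma_{i+1}(\bA)$. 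The singular-value estimate (\ref{eqn:th3}) then follows from $\|\hat\bu_i^T\bA\|_2 = \sigma_i(\bW)$ (immediate from the algorithm's SVD of $\bW$), the interlacing $\sigma_i(\bW)\leq\sigma_i$, and Weyl's perturbation bound $\sigma_i(\bW) \geq \sigma_i - \|(\bI-\bU_K\bU_K^T)\bA\|_2$. The orthonormal-$\bX$ simplification of $\Delta$ uses $\|\bV_{k,\perp}^T\bX(\bV_k^T\bX)^{\dagger}\|_F = \|\tan\bTheta(\bX,\bV_k)\|_F$ from the principal-angle machinery of Section~\ref{s_x}.

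The main technical obstacle is the two-norm bound. Unlike the Frobenius case, $\hat\bU_i\hat\bU_i^T\bA$ is not automatically the 2-norm-best rank-$i$ approximation from $\mathcal{K}_q$, so the variational principle does not directly give $\|\bA-\hat\bU_i\hat\bU_i^T\bA\|_2 \leq \|\bA-\bZ_i\bY\|_2$. A naive triangle inequality applied to the orthogonal decomposition above introduces a $\sqrt{2}$-type loss, so the tight additive bound requires carefully exploiting the alignment between the two orthogonal summands and the spectral structure of the rank-$i$ truncation residual, rather than simply bounding $\lambda_{\max}(A+B)\leq\lambda_{\max}(A)+\lambda_{\max}(B)$.
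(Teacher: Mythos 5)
Your construction for the Frobenius bound is fine and is essentially the paper's argument in a different dress: your explicit rank-$i$ trial $\bZ_i\bY$ plays the role that the paper assigns to the projected matrix $\bU_K\bU_K^T\bA_i$ (Lemmas~\ref{l_aux2}, \ref{lem:pd1}, \ref{l_aux3}), both routes resting on the optimality statement (\ref{eqn:opt1}) of Lemma~\ref{lem:restate}; indeed your row-space Pythagoras even delivers the stronger squared bound $\FNormS{\bA-\hat\bU_i\hat\bU_i^T\bA}\leq\FNormS{\bA-\bA_i}+\Delta^2$, which is exactly the paper's intermediate inequality (\ref{eqn:d3}). The problem is that you never close (\ref{eqn:th2}) or the lower bound in (\ref{eqn:th3}). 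For (\ref{eqn:th2}) you acknowledge the $\sqrt{2}$ loss in your orthogonal splitting and then appeal to "carefully exploiting the alignment," which is not an argument. The paper's fix is Lemma~\ref{l_gu} (Theorem 3.4 of \cite{G2014}): an additive error bound for the squared Frobenius norm relative to the best rank-$i$ approximation transfers verbatim to the squared two norm. Since your Frobenius step already gives the squared additive bound, invoking that lemma and then the scalar inequality $\sqrt{\alpha^2+\beta^2}\leq\alpha+\beta$ would have finished (\ref{eqn:th2}); without it, your proof of (\ref{eqn:th2}) is missing its key ingredient.

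The route you propose for the lower bound in (\ref{eqn:th3}) genuinely fails. The identity $\|\hat\bu_i^T\bA\|_2=\sigma_i(\bW)$ and the interlacing $\sigma_i(\bW)\leq\sigma_i$ are correct, but Weyl with the perturbation $(\bI-\bU_K\bU_K^T)\bA$ gives only $\sigma_i(\bW)\geq\sigma_i-\|(\bI-\bU_K\bU_K^T)\bA\|_2$, and $\|(\bI-\bU_K\bU_K^T)\bA\|_2$ is \emph{not} bounded by $\Delta$: it is at least $\sigma_{(q+1)s+1}(\bA)$ no matter how good $\bX$ is (e.g.\ if $\range(\bX)=\range(\bV_k)$ then $\Delta=0$ while the projection residual is nonzero whenever $\rank(\bA)>(q+1)s$), and the best bound your own construction supplies is about $\sqrt{\sigma_{k+1}^2+\Delta^2}$, so you obtain only $\|\hat\bu_i^T\bA\|_2\geq\sigma_i-\sigma_{k+1}-\Delta$, strictly weaker than the theorem. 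The paper avoids this by staying in the Frobenius norm: using $\hat\bU_i\hat\bU_i^T=\hat\bU_{i-1}\hat\bU_{i-1}^T+\hat\bu_i\hat\bu_i^T$, Lemma~\ref{l_pyth}, the optimality of $\bA_{i-1}$ (so $\FNorm{\bA_{i-1,\perp}}\leq\FNorm{\bA-\hat\bU_{i-1}\hat\bU_{i-1}^T\bA}$), the squared bound (\ref{e_d3}), and $\FNormS{\bA_{i-1,\perp}}-\FNormS{\bA_{i,\perp}}=\sigma_i^2$, which telescopes to $\sigma_i^2\leq\TNormS{\hat\bu_i^T\bA}+\Delta^2$ and hence $\|\hat\bu_i^T\bA\|_2\geq\sigma_i-\Delta$. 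You would need to replace your Weyl step by an argument of this kind (or something equivalent) for (\ref{eqn:th3}) to stand.
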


\begin{proof}
See Section~\ref{s_prooft4}.
\end{proof}
\medskip

Bounds of the form~(\ref{eqn:th3}) were already proposed in
\cite[Theorem 1]{MM2015} as a finer, \textit{vector-wise}, way to capture the quality of
approximations to individual left singular vectors of $\bA$.
Empirical evidence \cite{MM2015} suggests that error metrics of the form
(\ref{eqn:th1}) and~(\ref{eqn:th2}) indicate the quality of the
\textit{aggregate} approximation and are therefore coarser than~(\ref{eqn:th3}).

\subsection{Judicious choice of  polynomials}\label{sxn:randnlabounds}
We show the existence of and present bounds for the  polynomials
in Theorems \ref{t_2}, \ref{t_3}, and~\ref{t_4}. The strict inequality $\rank(\bA)>k$
in Algorithm~\ref{alg:BL} allows us to express the relative singular gap as
\begin{equation}\label{eqn:gamma}
\frac{\sigma_{k}-\sigma_{k+1}}{\sigma_{k+1}} \geq \gamma > 0,
\end{equation}
which is equivalent to $\sigma_k \geq (1+\gamma)\sigma_{k+1}>0$.

\begin{lemma}\label{lem:l1}
If (\ref{eqn:gamma}) holds, then there exists a polynomial $\phi(x)$ of degree $2q+1$
with odd powers only, such that
$\phi\left(\sigma_i\right) \geq \sigma_i>0$ for $1\leq i\leq k$, and
\begin{eqnarray*}
\abs{\phi\left(\sigma_i\right)}\leq \frac{4\sigma_{k+1}}{2^{(2q+1)\, \min
\left\{\sqrt{\gamma},1\right\}}}, \qquad i\geq k+1.
\end{eqnarray*}
Hence
$$\|\phi(\bSigma_k)^{-1}\|_2\leq \sigma_k^{-1} \qquad \mathrm{and}\qquad
\|\phi(\bSigma_{k,\perp})\|_2\leq
\frac{4\sigma_{k+1}}{2^{(2q+1)\, \min{\left\{\sqrt{\gamma},1\right\}}}}.$$
\end{lemma}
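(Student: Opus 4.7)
My plan is to construct $\phi$ explicitly as a suitably scaled Chebyshev polynomial of the first kind. Setting
\begin{equation*}
\phi(x) = \frac{\sigma_k}{T_{2q+1}(\sigma_k/\sigma_{k+1})}\, T_{2q+1}(x/\sigma_{k+1}),
\end{equation*}
where $T_{2q+1}$ denotes the Chebyshev polynomial of degree $2q+1$, yields a polynomial of degree exactly $2q+1$ whose monomials all have odd powers, because $T_n$ contains only monomials of the same parity as $n$. This produces the odd-polynomial structure required by the statement at no extra cost, and $\phi(\sigma_k) = \sigma_k$ by design.

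For the lower bound $\phi(\sigma_i) \geq \sigma_i$ when $1 \leq i \leq k$, I would first show that the map $t \mapsto T_{2q+1}(t)/t$ is monotonically increasing on $[1,\infty)$. Via the substitution $t = \cosh\theta$, this reduces to $n\coth\theta\,\tanh(n\theta) \geq 1$, which follows from $\tanh(n\theta) \geq \tanh\theta$ and $\coth\theta\,\tanh\theta = 1$. Since $\sigma_i/\sigma_{k+1} \geq \sigma_k/\sigma_{k+1} > 1$ for $i \leq k$, monotonicity gives $\phi(\sigma_i)/\sigma_i \geq \phi(\sigma_k)/\sigma_k = 1$, and positivity yields nonsingularity of $\phi(\bSigma_k)$. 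For $i \geq k+1$, the upper bound follows from $|\sigma_i/\sigma_{k+1}| \leq 1$ together with $|T_{2q+1}| \leq 1$ on $[-1,1]$, giving $|\phi(\sigma_i)| \leq \sigma_k/T_{2q+1}(\sigma_k/\sigma_{k+1})$.

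The main obstacle is matching the bound $4\sigma_{k+1}/2^{(2q+1)\min\{\sqrt{\gamma},1\}}$ exactly, which forces a case split. Using the standard growth estimate $T_n(r) \geq \tfrac{1}{2}(r+\sqrt{r^2-1})^n$, and observing that the resulting quantity is monotonically decreasing in $r = \sigma_k/\sigma_{k+1} \geq 1+\gamma$, the worst case reduces to $r = 1+\gamma$. For $\gamma \leq 1$, I would combine $\sqrt{(1+\gamma)^2-1} \geq \sqrt{2\gamma}$ with the elementary inequality $1+\sqrt{2\gamma} \geq 2^{\sqrt{\gamma}}$ on $[0,1]$ (easily checked by comparing values and derivatives) to conclude $(r+\sqrt{r^2-1})^{2q+1} \geq 2^{(2q+1)\sqrt{\gamma}}$, and then absorb the numerator factor $2(1+\gamma) \leq 4$. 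For $\gamma \geq 1$, I would use the simpler $T_{2q+1}(r) \geq \tfrac{1}{2}r^{2q+1}$ together with $r \geq 2$ to obtain $|\phi(\sigma_i)| \leq 2\sigma_{k+1}/r^{2q} \leq 4\sigma_{k+1}/2^{2q+1}$. The two displayed consequences then follow immediately: $\|\phi(\bSigma_k)^{-1}\|_2 \leq 1/\sigma_k$ since $\phi(\sigma_i) \geq \sigma_i \geq \sigma_k$ for $i \leq k$, and $\|\phi(\bSigma_{k,\perp})\|_2$ equals the maximum over $i \geq k+1$ of the pointwise bound just established.
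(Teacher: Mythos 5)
Your proposal is correct and takes essentially the same route as the paper: a Chebyshev polynomial $T_{2q+1}$ rescaled in argument by $\sigma_{k+1}$ and in value, with $|T_{2q+1}|\leq 1$ on $[-1,1]$ handling $i\geq k+1$, monotonicity of $T_{2q+1}(t)/t$ on $[1,\infty)$ giving $\phi(\sigma_i)\geq\sigma_i$ for $i\leq k$, and the growth estimate with the case split $\gamma\leq 1$ versus $\gamma\geq 1$ producing the factor $4\cdot 2^{-(2q+1)\min\{\sqrt{\gamma},1\}}$. The only cosmetic differences are your normalization at $\sigma_k$ rather than at $(1+\gamma)\sigma_{k+1}$, and that you verify the Chebyshev growth and monotonicity facts directly instead of routing them through the paper's ``gap-amplifying polynomial'' framework taken from \cite{MM2015}.
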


\begin{proof} See Section~\ref{s_l1proof}.
\end{proof}
\medskip

We apply Lemma~\ref{lem:l1} to the previous results, first for the special case when $\bX$ has linearly
independent columns. Abbreviate
$$\Gamma(\bTheta,\gamma,q) \equiv 4\,
\frac{\|\tan{\bTheta(\bX,\bV_k)}\|_{2}}{2^{(2q+1)\, \min\left\{\sqrt{\gamma},1\right\}}}.$$
To keep things short, we consider only the two-norm bound for Theorem~\ref{t_2}.

\begin{corollary}\label{c_23}
Let (\ref{eqn:gamma}) hold and $\rank(\bV_k^T\bX)=k$.
If $\bX$ has orthonormal columns, then
\begin{eqnarray*}
\|\sin{\bTheta(\mathcal{K}_q,\bU_k)}\|_{2}&\leq& \Gamma(\bTheta,\gamma,q)\,\frac{\sigma_{k+1}}{\sigma_k} \leq \frac{\Gamma(\bTheta,\gamma,q)}{1+\gamma},
\end{eqnarray*}
and
\begin{eqnarray*}
|\sin{\bTheta(\mathcal{K}_q,\bu_i)}|&\leq& \Gamma(\bTheta,\gamma,q)\,
\frac{\sigma_{k+1}}{\sigma_i},\qquad 1\leq i\leq k.
\end{eqnarray*}
\end{corollary}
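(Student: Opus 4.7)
The plan is to simply invoke Lemma~\ref{lem:l1} to supply a concrete polynomial $\phi$ and then substitute the resulting two-norm bounds into Theorems~\ref{t_2} and~\ref{t_3} under the orthonormal-columns hypothesis. No new ideas are needed; the work is bookkeeping.

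First I would verify that the polynomial $\phi$ furnished by Lemma~\ref{lem:l1} satisfies every hypothesis imposed by Theorems~\ref{t_2} and~\ref{t_3}. The lemma produces a polynomial of degree $2q+1$ with odd powers only, satisfying $\phi(\sigma_i)\geq\sigma_i>0$ for $1\leq i\leq k$; in particular $\phi(\bSigma_k)$ is nonsingular. The assumption $\rank(\bV_k^T\bX)=k$ in the corollary then matches the full-rank hypothesis of both theorems, and the orthonormality of $\bX$ lets us replace the generic factor $\|\bV_{k,\perp}^T\bX(\bV_k^T\bX)^{\dagger}\|_2$ by $\|\tan\bTheta(\bX,\bV_k)\|_2$ as in those theorems.

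Next I would substitute the two quantitative conclusions of Lemma~\ref{lem:l1}, namely
\mand{\|\phi(\bSigma_k)^{-1}\|_2\leq \sigma_k^{-1},\qquad \|\phi(\bSigma_{k,\perp})\|_2\leq \frac{4\sigma_{k+1}}{2^{(2q+1)\min\{\sqrt{\gamma},1\}}},}
into the two-norm bound of Theorem~\ref{t_2}. Pulling a factor of $\sigma_{k+1}/\sigma_k$ out, the remaining factor is precisely $\Gamma(\bTheta,\gamma,q)$, which gives the first inequality. The second inequality in the first display then follows from (\ref{eqn:gamma}), which yields $\sigma_{k+1}/\sigma_k\leq 1/(1+\gamma)$.

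For the vector-wise bound I would substitute the same estimate for $\|\phi(\bSigma_{k,\perp})\|_2$ together with $|\phi(\sigma_i)|\geq \sigma_i$ (valid since Lemma~\ref{lem:l1} guarantees $\phi(\sigma_i)\geq\sigma_i>0$) into the two-norm bound of Theorem~\ref{t_3}, yielding
\mand{|\sin\bTheta(\mathcal{K}_q,\bu_i)|\leq \frac{4\sigma_{k+1}}{\sigma_i\,2^{(2q+1)\min\{\sqrt{\gamma},1\}}}\|\tan\bTheta(\bX,\bV_k)\|_2=\Gamma(\bTheta,\gamma,q)\frac{\sigma_{k+1}}{\sigma_i}.}
There is no real obstacle in this corollary: the content lies in Lemma~\ref{lem:l1} (the Chebyshev-type construction) and in the two theorems themselves. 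The only mild sanity check is that the $1\leq i\leq k$ range in the vector-wise bound is covered by the positivity statement $\phi(\sigma_i)\geq\sigma_i>0$ of Lemma~\ref{lem:l1}, so the absolute value may be dropped in the denominator.
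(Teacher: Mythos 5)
Your proposal is correct and follows exactly the paper's route: the paper's proof consists precisely of applying Lemma~\ref{lem:l1} to Theorems~\ref{t_2} and~\ref{t_3}, with the second inequality following from $\sigma_k\geq(1+\gamma)\sigma_{k+1}$ as you note. The bookkeeping you spell out (substituting $\|\phi(\bSigma_k)^{-1}\|_2\leq\sigma_k^{-1}$, $\|\phi(\bSigma_{k,\perp})\|_2\leq 4\sigma_{k+1}/2^{(2q+1)\min\{\sqrt{\gamma},1\}}$, and $|\phi(\sigma_i)|\geq\sigma_i$) is exactly what the paper leaves implicit.
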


\begin{proof}
Apply Lemma~\ref{lem:l1} to Theorems \ref{t_2} and~\ref{t_3}.
\end{proof}
\medskip

\begin{corollary}\label{c_4}
Let (\ref{eqn:gamma}) hold and $\rank(\bV_k^T\bX)=k$.
If $\bX$ has orthonormal columns, then Theorem~\ref{t_4} holds with
$$\Delta\leq \Gamma(\bTheta,\gamma,q)\, \sigma_{k+1},$$
so that for $1\leq i\leq k$
\begin{eqnarray*}
\FsNorm{\bA-{\hat\bU_i}{\hat\bU_i}^T\bA} &\le&
\FsNorm{\bA-\bA_{i}}+\Gamma(\bTheta,\gamma,q)\, \sigma_{k+1},\\
\TsNorm{\bA-{\hat\bU_i}{\hat\bU_i}^T\bA} &\le&
\TsNorm{\bA-\bA_{i}}+\Gamma(\bTheta,\gamma,q)\,\sigma_{k+1},\\
\sigma_i-\Gamma(\bTheta,\gamma,q)\, \sigma_{k+1}
 &\leq& \TsNorm{\hat\bu_i^T\bA}\le \sigma_i.
\end{eqnarray*}
\end{corollary}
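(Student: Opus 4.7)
The plan is purely substitutional: combine Theorem~\ref{t_4}, whose conclusion is parameterized by an arbitrary admissible polynomial $\phi$, with the explicit construction supplied by Lemma~\ref{lem:l1}. No fresh machinery is needed, and the proof reduces to verifying hypotheses and chaining the two bounds.

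First I would observe that under the relative singular gap assumption~(\ref{eqn:gamma}), Lemma~\ref{lem:l1} produces a polynomial $\phi(x)$ of degree $2q+1$ with odd powers only, satisfying $\phi(\sigma_i)\geq \sigma_i>0$ for $1\leq i\leq k$. This matches the hypothesis list of Theorem~\ref{t_4} exactly: odd powers only, $\phi(\bSigma_k)$ nonsingular (immediate from $\phi(\sigma_i)>0$), and $\phi(\sigma_i)\geq \sigma_i$. Hence Theorem~\ref{t_4} applies with this choice of $\phi$, and all three inequalities~(\ref{eqn:th1})--(\ref{eqn:th3}) are in force.

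Second, because $\bX$ has orthonormal columns, the ``addition'' clause of Theorem~\ref{t_4} rewrites
\[
\Delta=\|\phi(\bSigma_{k,\perp})\|_2\,\|\tan{\bTheta(\bX,\bV_k)}\|_{F}.
\]
Lemma~\ref{lem:l1} bounds $\|\phi(\bSigma_{k,\perp})\|_2\leq 4\sigma_{k+1}/2^{(2q+1)\min\{\sqrt{\gamma},1\}}$, so multiplying the two inequalities and reading off the definition of $\Gamma(\bTheta,\gamma,q)$ gives $\Delta\leq \Gamma(\bTheta,\gamma,q)\,\sigma_{k+1}$. Plugging this estimate into~(\ref{eqn:th1})--(\ref{eqn:th3}) of Theorem~\ref{t_4} yields the three claimed inequalities directly.

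The only point requiring care, which is notational rather than mathematical, is the mismatch between the two-norm of $\tan\bTheta(\bX,\bV_k)$ that enters the definition of $\Gamma(\bTheta,\gamma,q)$ and the Frobenius norm appearing in $\Delta$; consistency must be maintained between the norm used in $\Gamma$ and the norm used when bounding $\Delta$, but once that is fixed the estimate is immediate. I do not foresee any nontrivial obstacle, since the analytical content has already been absorbed into Theorem~\ref{t_4} and Lemma~\ref{lem:l1}; the corollary is just their juxtaposition.
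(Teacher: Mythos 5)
Your proposal is correct and is essentially the paper's own proof, which consists of exactly this substitution: apply Lemma~\ref{lem:l1} (with $\alpha=\sigma_{k+1}$) to verify the hypotheses of Theorem~\ref{t_4} and to bound $\|\phi(\bSigma_{k,\perp})\|_2$, then read off $\Delta\leq\Gamma(\bTheta,\gamma,q)\,\sigma_{k+1}$. The norm mismatch you flag is a genuine (minor) imprecision in the paper itself, since $\Gamma$ is defined with $\|\tan\bTheta(\bX,\bV_k)\|_2$ while $\Delta$ involves the Frobenius norm, so for the corollary one should read $\Gamma$ with the Frobenius norm (or accept the weaker statement); your handling of this is appropriate.
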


\begin{proof}
Apply Lemma~\ref{lem:l1} to Theorem~\ref{t_4}.
\end{proof}
\medskip

To achieve an additive error of $\Gamma(\bTheta,\gamma,q) \leq \epsilon$,
set $q$ to be the smallest integer that exceeds
\begin{equation}\label{eqn:valq}
  q \geq \frac{1}{2\min\left\{\sqrt{\gamma},1\right\}}\left(\log_2 4\|\tan{\bTheta(\bX,\bV_k)}\|_{2}-\log_2{\epsilon}\right).
\end{equation}
Thus, as the singular value gap $\gamma$ decreases, the dimension of the space $\mathcal{K}_q$ increases.
More specifically,
$q$ increases logarithmically with higher target accuracy $\epsilon$ and increasing
distance of $\bX$ from the dominant right singular vector space of $\bA$.

If $\bX$ is rank deficient then Corollaries \ref{c_23} and~\ref{c_4} still hold with
$$ \Gamma(\bTheta,\gamma,q) = 4\,
\frac{\|\bV_{k,\perp}^T\bX(\bV_{k}^T\bX)^{\dagger}\|_{2}}{2^{(2q+1)\, \min
\left\{\sqrt{\gamma},1\right\}}}.$$

\subsection{The initial guess}\label{s_x}
It remains to bound $\|\bV_{k,\perp}^T\bX\,(\bV_{k}^T\bX)^{\dagger}\|_{2,F}$.
The simplest way might be strong submultiplicativity,
$$\|\bV_{k,\perp}^T\bX\,(\bV_{k}^T\bX)^{\dagger}\|_{2,F}\leq
\|\bV_{k,\perp}^T\bX\|_{2,F}\>\|(\bV_{k}^T\bX)^{\dagger}\|_2=
 \frac{\|\bV_{k,\perp}^T\bX\|_{2,F}}{\sigma_k(\bV_{k}^T \bX)},$$
followed by separate bounds for the individual factors.

Ideally, the starting guess $\bX$ should be close to
$\range(\bV_k)$ and far away  from $\range(\bV_{k,\perp})$, so that
$\sigma_k(\bV_k^T\bX)$ is large and
$\|\bV_{k,\perp}^T\bX\|_{2,F}$ is small.
The assumption $\sigma_k(\bV_{k}^T \bX)>0$  is critical for our results,
hence a necessary condition for the user-specified matrix $\bX\in\real^{n\times s}$
is $\rank(\bX)\geq k$, while
trying to keep the column dimension $s\geq k$ small.

If $\bX$ is a random Gaussian, then $\sigma_k(\bV_k^T\bX)$ is bounded
away from zero with high probability even for $s=k$.
However, there are many other choices for~$\bX$ that come with lower bounds for
$\sigma_k(\bV_k^T\bX)$. They include
random sign matrices \cite{Achlioptas2001,Magen2011},
the fast randomized Hadamard transform \cite{Ailon2009,Sarlos2006},
the subsampled randomized Hadamard transform \cite{Drineas2011,Tropp2010},
the fast randomized discrete cosine transform \cite{Nguyen2009}, and
input sparsity time embeddings \cite{Clarkson2013,Meng2013,Nelson2013}.

In contrast, keeping $\|\bV_{k,\perp}^T\bX\|_{F}$ small is relatively easy.
For typical random matrices $\bX$, one can show that, with high probability,
$$\|\bV_{k,\perp}^T\bX\|_{2,F}\leq c\,\|\bV_{k,\perp}\|_F\leq c\sqrt{n},$$
where $c$ is a small constant.

For instance, if $s=1$ and $\bX$ is a Gaussian column vector, then
\begin{equation}\label{eqn:bb}
\Expect{\FNormS{\bV_{k,\perp}^T\bX}}= \FNormS{\bV_{k,\perp}^T}\leq n.
\end{equation}
Markov's inequality guarantees that, with probability at least .9,
$$\FNorm{\bV_{k,\perp}^T\bX}\leq \sqrt{10n}.$$
Essentially all randomized embedding matrices
satisfy variants of (\ref{eqn:bb}), and we  expect
the iteration count $q$ in (\ref{eqn:valq}) to be logarithmic in~$n$.

From a numerical point of view, a starting guess $\bX$ with orthonormal
columns is preferable. Thus one could pick a random matrix $\bX$ and apply a
thin QR decomposition $\bX=\bQ\bR$.
However, this significantly complicates the derivation
of bounds for $\|\bV_{k,\perp}^T\bX\|_{2,F}$ and $\|(\bV_{k}^T\bX)^{\dagger}\|_2$,
as most matrix concentration inequalities apply
only to the original random matrix $\bX$, not to its orthonormal basis $\bQ$.
For instance, if $\bX$ is a random matrix whose entries
are $\pm 1$ with equal probability, then $\bQ$ does not inherit this property.
Fortunately,
the subsampled Hadamard transform \cite{Drineas2011,Tropp2010} is
one of a few random matrices with orthonormal columns, hence amenable to
application of matrix concentration inequalities.

\section{Comparison to existing work}\label{s_lit}
Our work on subspace computations is motivated by a recent probabilistic approach for
low-rank approximations via block Krylov spaces~\cite{MM2015}.

\paragraph{Randomized Methods}
Analyses of numerical methods that compute dominant subspaces and eigenvectors from
randomized starting vectors date back at least to the 1980s. They include
the power method and inverse iteration \cite{Dix83,JeI90}, and information theoretic analyses
of Lanczos methods \cite{KW92,KW94}.

Current analyses in Theoretical Computer Science focus on low-rank approximations \cite{HMT09_SIREV,TCS-060},
rather than subspace computations,  and as such tend not to produce bounds for the accuracy of subspaces
such as those in Section~\ref{s_angle}.

A popular approach towards low-rank approximation is subspace iteration, which makes use of
only the last iterate $(\bA\bA^T)^q\bA\bX$ \cite{HMT09_SIREV,TCS-060}.
Then came block Krylov methods, which exploit all of the iterates $(\bA\bA^T)^j\bA\bX$, $0\leq j\leq q$.
The analysis in~\cite{MM2015} relies on generalized matrix functions \cite{ABF2016,HBI1973},
but is limited to
Gaussian random matrices for starting guesses $\bX \in \real^{n \times k}$, and Chebyshev
polynomials for $\phi$.
The eponymous \textit{gap-dependent bound} \cite[Theorem 13]{MM2015}
requires a gap between the $k$th and $(k+1)$st singular values, and can be considered a
special case of Theorem~\ref{t_4}. However, \cite[Theorems 10, 11, and 12]{MM2015}
do not require a singular value gap such as (\ref{eqn:gamma}).
Such \textit{gap-independent bounds} are informative for low-rank approximations,
but not for computations of specific subspaces, as explained in Section~\ref{s_intro}.

Close on the heels of \cite{MM2015} is \cite{Wang2015}, with
a focus on gap-independent bounds of the type (\ref{eqn:th2})
and random Gaussian starting guesses \cite[Theorem 3.1]{Wang2015}.
The proof techniques in \cite{Wang2015} resemble ours, and leverage our prior work, see Lemma~\ref{lem:restate}
and \cite{BDM2011},  but numerical issues are not addressed.

\paragraph{Traditional, deterministic methods}
Although non-numerical in nature as well, our results are nevertheless
guided in spirit by foundational work on eigenvalue and invariant subspace
computations, including
the standard Lanczos convergence analysis~\cite[Section 6.6]{Saa11},
a geometric view of Krylov space methods~\cite{Beattie2004,Beattie2005},
block Lanczos methods~\cite{Li2015,Saa80}; and Rayleigh-Ritz bounds~\cite{BDR09,Dr96},
but also by Krylov space methods for singular value problems~\cite{Baglama2005, Baglama2006}.

A more detailed comparison, though, seems elusive due to differences in both,
the computational problem and the algorithm.
The analyses  in~\cite{Beattie2004,Beattie2005} target vector rather than block methods,
for eigenvalues and invariant subspaces of non-Hermitian matrices, with a concern
for restarting. The block methods in~\cite{Li2015,Saa80} are Lanczos methods for
Hermitian eigenvalue problems, and the analyses exploit the (block) tridiagonal
structure resulting from recursions. Although
singular value problems are considered in~\cite{Baglama2005}
and in~\cite{Baglama2006} with block methods, the Krylov spaces
are different and the focus is on algorithmic issues of augmenting and restarting
the Lanczos process, rather than subspace distances. Krylov spaces
For the solution of ill-posed least squares problems via LSQR, \cite{Jia2017} analyzes
the accuracy of a regularized solution, by bounding the sine between
$\mathcal{K}_j(\bA^T\bA, \bA^T\bv)$ and a dominant right singular vector space; however
all singular values must be distinct.

In the context of low-rank approximations,
\cite{SimonZha2000} proposed a Lanczos bidiagonalization with one-sided reorthogonalization.

In contrast, the context of this paper is singular vector spaces for general matrices
of any dimension; and an algorithm that is not tied to a particular recursion
and, due to steps 3-6, is not a straight-forward Krylov method. Furthermore,
the key feature of our analyses is a least squares approach \cite{BDM2011,BDM2014}
that assures the quality of the approximation.

\section{Auxiliary results}\label{s_aux}
We review submultiplicative inequalities for norms,
the matrix Pythagoras theorem, and solutions of multiple right-hand side
least  squares problems in the two norm (Section~\ref{sxn:la}). We also
present expressions for  elements of the Krylov space $\mathcal{K}_q$ (Section~\ref{s_kryprop}),
review angles between subspaces (Section~\ref{sxn:angles}),
and introduce gap-amplifying polynomials (Section~\ref{sxn:cheb}).

\subsection{Norm inequalities, Pythagoras, and least squares}\label{sxn:la}
We make frequent use of the strong sub-multiplicativity of the Frobenius norm
\cite[page 211]{HoJ91}. For matrices $\bY_1\in\real^{m\times k}$ and
$\bY_2\in\real^{k\times n}$,
\begin{eqnarray*}
\FNorm{\bY_1 \bY_2} &\leq& \TNorm{\bY_1}\FNorm{\bY_2}\\
\FNorm{\bY_1 \bY_2} &\leq& \FNorm{\bY_1}\TNorm{\bY_2}.
\end{eqnarray*}
If, in addition, $\rank(\bY_1)=\rank(\bY_2)$, then \cite[Theorem 2.2.3]{Bjo15}
\begin{equation}\label{eqn:pinv}
\left(\bY_1\bY_2\right)^{\dagger} = \bY_2^{\dagger}\bY_1^{\dagger}.
\end{equation}

\begin{lemma}[Matrix Pythagoras]\label{l_pyth}
Let $\bA,\bB\in\real^{m\times n}$. If $\bA^T\bB=\bzero$ then
$$\|\bA+\bB\|_F^2=\|\bA\|_F^2+\|\bB\|_F^2.$$
\end{lemma}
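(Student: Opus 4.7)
The plan is to use the trace characterization of the Frobenius norm and simply expand. I would write $\|\bM\|_F^2=\trace(\bM^T\bM)$ for any real matrix $\bM$, so that
\[
\|\bA+\bB\|_F^2=\trace\bigl((\bA+\bB)^T(\bA+\bB)\bigr)=\trace(\bA^T\bA)+\trace(\bA^T\bB)+\trace(\bB^T\bA)+\trace(\bB^T\bB).
\]
Then I would invoke the hypothesis $\bA^T\bB=\bzero$, which also yields $\bB^T\bA=(\bA^T\bB)^T=\bzero$, so the two cross terms vanish. What remains is $\trace(\bA^T\bA)+\trace(\bB^T\bB)=\|\bA\|_F^2+\|\bB\|_F^2$, which is the claim.

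As an alternative, and perhaps more in keeping with the ``Pythagoras'' label, I could proceed column by column: the identity $\bA^T\bB=\bzero$ implies in particular that the $j$th column of $\bA$ is orthogonal to the $j$th column of $\bB$, so the ordinary vector Pythagoras theorem gives $\|\bA_{:j}+\bB_{:j}\|_2^2=\|\bA_{:j}\|_2^2+\|\bB_{:j}\|_2^2$. Summing over $j=1,\ldots,n$ and using $\|\bM\|_F^2=\sum_j\|\bM_{:j}\|_2^2$ recovers the same identity.

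There is essentially no obstacle: the result is a one-line consequence of the trace expansion, and the hypothesis is used precisely to kill the cross terms $\trace(\bA^T\bB)$ and $\trace(\bB^T\bA)$. The only thing worth noting is that the full hypothesis $\bA^T\bB=\bzero$ is stronger than what we strictly need (merely $\trace(\bA^T\bB)=0$ would suffice for the Frobenius identity), but the stated form is the natural and useful one in context, since it is the form in which orthogonality will arise in the subsequent applications of the lemma.
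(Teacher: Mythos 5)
Your proposal is correct and follows essentially the same route as the paper: expand $\|\bA+\bB\|_F^2$ via the trace and use $\bA^T\bB=\bzero$ to kill the cross terms. The column-wise variant and the remark that only $\trace(\bA^T\bB)=0$ is strictly needed are fine but add nothing beyond the paper's argument.
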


\begin{proof}
From $\bA^T\bB=\bzero$ and the linearity of the trace
\begin{eqnarray*}
\|\bA+\bB\|_F^2&=&\trace\left(\bA^T\bA+\bA^T\bB+\bB^T\bA+\bB^T\bB\right)\\
&=&\trace(\bA^T\bA)+\trace(\bB^T\bB)=\|\bA\|_F^2+\|\bB\|_F^2.
\end{eqnarray*}
\end{proof}

References to optimal solutions for multiple right-hand side least squares problems
in the two norm are hard to find.  Lemma~\ref{lem:2norm} below is easy to
prove in the Frobenius norm. An elaborate proof for Schatten $p$ norms
can be found in \cite[Theorems 3.2 and 3.3]{Mah92}. For the sake of completeness, we present a straightforward proof for the two norm.
\begin{lemma}\label{lem:2norm}
Let $\bA \in \real^{m \times n}$ and $\bB \in \real^{m \times p}$. Then\footnote{The superscript $\dagger$ denotes
the Moore-Penrose inverse.}
$\bA^{\dagger}\bB$ is a solution of
$$\min_{\bX \in \real^{n \times p}} \|\bB-\bA\bX\|_2$$
with least squares residual
$\|(\bI- \bA\bA^\dagger)\bB\|_2=
\min_{\bX \in \real^{n \times p}} \|\bB-\bA\bX\|_2$.
\end{lemma}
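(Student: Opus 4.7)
The approach is to mimic the classical normal-equations style proof but to apply Pythagoras pointwise (at the level of single test vectors) rather than at the matrix level, since the two norm does not split additively over orthogonal summands the way the Frobenius norm does.

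First I would perform the standard algebraic decomposition: for any $\bX \in \real^{n \times p}$, write
\mand{\bB-\bA\bX = (\bI-\bA\bA^{\dagger})\bB + \bA(\bA^{\dagger}\bB - \bX).}
Using the Moore--Penrose identities $\bA\bA^{\dagger}\bA=\bA$ and $(\bA\bA^{\dagger})^T = \bA\bA^{\dagger}$, I would then verify that
\mand{\bigl((\bI-\bA\bA^{\dagger})\bB\bigr)^T \bA = \bzero,}
so that the columns of the two summands lie in the orthogonal subspaces $\range(\bA)^{\perp}$ and $\range(\bA)$ respectively.

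Next, for an arbitrary unit vector $\bv \in \real^p$, I would apply the vector Pythagoras identity to the two orthogonal vectors $(\bI-\bA\bA^{\dagger})\bB\bv$ and $\bA(\bA^{\dagger}\bB-\bX)\bv$ to obtain
\mand{\TNormS{(\bB-\bA\bX)\bv} = \TNormS{(\bI-\bA\bA^{\dagger})\bB\bv} + \TNormS{\bA(\bA^{\dagger}\bB-\bX)\bv} \geq \TNormS{(\bI-\bA\bA^{\dagger})\bB\bv}.}
Choosing $\bv = \bv^{\ast}$ to be a unit vector attaining $\TNorm{(\bI-\bA\bA^{\dagger})\bB} = \TNorm{(\bI-\bA\bA^{\dagger})\bB\bv^{\ast}}$ and observing $\TNorm{\bB-\bA\bX} \geq \TNorm{(\bB-\bA\bX)\bv^{\ast}}$ yields
\mand{\TNorm{\bB-\bA\bX} \geq \TNorm{(\bI-\bA\bA^{\dagger})\bB}\qquad\text{for every }\bX.}

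Finally, substituting $\bX = \bA^{\dagger}\bB$ makes the second summand vanish, so $\bB - \bA\bA^{\dagger}\bB = (\bI-\bA\bA^{\dagger})\bB$ and the lower bound is attained, proving both that $\bA^{\dagger}\bB$ is a minimizer and that the minimum residual equals $\TNorm{(\bI-\bA\bA^{\dagger})\bB}$. The only subtle point is the pointwise-to-supremum step: one must resist the temptation to apply Pythagoras directly to matrix two norms (which is false in general), and instead exploit the fact that a pointwise inequality between two nonnegative functionals transfers to their suprema via a single well-chosen maximizer. Beyond that, the argument is just bookkeeping with the Moore--Penrose identities.
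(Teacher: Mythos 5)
Your proof is correct and follows essentially the same route as the paper's: an orthogonal splitting of $\bB-\bA\bX$ into its components in $\range(\bA)$ and $\range(\bA)^{\perp}$ (the part in $\range(\bA)^{\perp}$ being $(\bI-\bA\bA^{\dagger})\bB$, independent of $\bX$), followed by vector Pythagoras applied at a single unit vector chosen to maximize the residual term, and attainment at $\bX=\bA^{\dagger}\bB$. The only cosmetic difference is that you certify the orthogonality via the Moore--Penrose identities, whereas the paper writes the same projectors explicitly through the thin SVD ($\bA\bA^{\dagger}=\bU\bU^T$, $\bI-\bA\bA^{\dagger}=\bU_{\perp}\bU_{\perp}^T$).
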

\medskip
\begin{proof}
Let $\bA = \bU \bSigma \bV^T$ be a thin SVD, and let
$\begin{pmatrix}\bU & \bU_{\perp}\end{pmatrix}\in\real^{m\times m}$
be an orthogonal matrix. Any $\bX\in\real^{n\times p}$ satisfies
\begin{eqnarray*}
  \TNormS{\bB-\bA\bX} &=& \TNormS{\bU\bU^T(\bB-\bA\bX)+\bU_\perp\bU_\perp^T(\bB-\bA\bX)} \\
  &=& \TNormS{\bU(\bU^T\bB-\bSigma\bV^T\bX)+\bU_\perp\bU_\perp^T\bB} \\
  &=& \TNormS{\bU\bT_1+\bU_\perp\bT_2},
\end{eqnarray*}
where $\bT_1\equiv\bU^T\bB-\bSigma\bV^T\bX$ and $\bT_2\equiv\bU_\perp^T\bB$.
 Let $\by_{opt}\in\real^p$ with $\|\by_{opt}\|_2=1$
satisfy $\TNormS{\bU_\perp\bT_2\by_{opt}} = \TNormS{\bU_\perp\bT_2}$.
The vector Pythagoras theorem implies
\begin{eqnarray*}
\TNormS{\bB-\bA\bX} &\geq &\TNormS{(\bU\bT_1+\bU_\perp\bT_2)\by_{opt}}\\
&=& \TNormS{\bU\bT_1\by_{opt}}+\TNormS{\bU_{\perp}\bT_2\by_{opt}}
\geq \TNormS{\bU_{\perp}\bT_2\by_{opt}}.
\end{eqnarray*}
Combining all of the above gives
$$\TNormS{\bB-\bA\bX}\geq\TNormS{(\bU\bT_1+\bU_\perp\bT_2)\by_{opt}} \geq \TNormS{\bU_{\perp}\bT_2}=\TNormS{\bU_{\perp}\bU_{\perp}^T\bB}.$$
This lower bound is achieved by $\bX_{opt}=\bA^{\dagger}\bB$,
$$\TNormS{\bB-\bA\bX_{opt}}=\TNormS{\bB-\bA\bA^{\dagger}\bB}=\TNormS{(\bI-\bU\bU^T)\bB}=\TNormS{\bU_{\perp}\bU_{\perp}^T\bB}.$$
\end{proof}

\subsection{The Krylov space}\label{s_kryprop}
The elements of the Krylov space $\mathcal{K}_q$ in (\ref{eqn:KrylovSpace})
can be expressed in terms of matrices
$\hat{\phi}(\bA\bA^T)\bA\bX\in\real^{m\times s}$, where $\hat{\phi}$ is a
polynomial of degree $q$. From the point of view of singular values, though, we
need a higher degree polynomial,
$$\hat{\phi}(\bA\bA^T)\,\bA\bX=\bU\,\hat{\phi}(\bSigma\bSigma^T)\bSigma\,\bV^T\bX=
\bU\,\phi(\bSigma)\,\bV^T\bX.$$
Here $\phi$ is a polynomial of degree $2q+1$ with odd powers only, and represents a
\textit{generalized matrix function} \cite{ABF2016,HBI1973}.
Since
$$\bSigma=\diag\begin{pmatrix}\sigma_1 & \cdots & \sigma_{\min\{m,n\}}\end{pmatrix}
\in\real^{m\times n}$$ is rectangular, the polynomial $\phi$ is applied to the diagonal elements
of $\bSigma$ only, and returns a diagonal matrix
of the same dimension,
$$\phi(\bSigma)\equiv\diag\begin{pmatrix}\phi(\sigma_1) & \cdots &
\phi(\sigma_{\min\{m,n\}})\end{pmatrix}\in \real^{m\times n}.$$
With this, we denote elements in $\mathcal{K}_q$ by
\begin{equation}\label{eqn:Phidef}
\bPhi\equiv \bU\phi(\bSigma)\bV^T\bX\> \in\real^{m\times s}.
\end{equation}
Clearly,
\begin{equation}\label{eqn:rangeK}
\range(\bPhi)\subset\mathcal{K}_q.
\end{equation}

The assumption $\dim(\mathcal{K}_q)=(q+1)s\leq m$ from Algorithm~\ref{alg:BL} guarantees
that $\bU_K$ is indeed an orthonormal basis for $\mathcal{K}_q$.

\subsection{Angles between subspaces}\label{sxn:angles}
Let $\bQ\in\real^{n\times s}$ and $\bW_k\in\real^{n\times k}$,
with $k\leq s$, be matrices with orthonormal columns. Hence, the singular values
$\sigma_j(\bW_k^T\bQ)$ lie between zero and one, and we can write
$\sigma_j(\bW_k^T\bQ)=\cos{\theta_j}$, $1\leq j\leq k$.
The \textit{principal} or \textit{canonical angles} between $\range(\bQ)$ and
$\range(\bW_k)$ are \cite[Section 6.4.3]{GovL13}
$$0\leq \theta_1\leq \cdots \leq\theta_k\leq \pi/2,$$
where $\theta_j = \cos^{-1}(\sigma_j(\bW_k^T\bQ))$. Following \cite[Definition I.5.3]{StS90}, we define the diagonal matrix of principal
angles between the subspaces spanned by the columns of $\bQ$ and the columns of $\bW_k$
$$\bTheta\left(\bQ,\bW_k\right)\equiv\diag\begin{pmatrix} \theta_1 & \cdots & \theta_k\end{pmatrix}.$$
Hence the singular values of $\bW_k^T\bQ$ are the diagonal elements of $\cos{\bTheta}\left(\bQ,\bW_k\right)$.
From \cite[Section 6.4.3]{GovL13} and \cite[Section I.5.3]{StS90} follows that
the distance between $\range(\bW_k)$ and $\range(\bQ)$ in the two and Frobenius norms, respectively, equals
\begin{eqnarray}\label{e_dist}
\|\sin{\bTheta}\left(\bQ,\bW_k\right)\|_{2,F}=\|(\bI-\bW_k\bW_k^T)\bQ\|_{2,F}.
\end{eqnarray}
In particular,  $\|\sin{\bTheta}\left(\bQ,\bW_k\right)\|_2=\sin{\theta_k}$, so the two norm distance
is determined by the largest principal angle.

Assume that $\range(\bW_k)$ and $\range (\bQ)$
are sufficiently close, so that the largest angle $\theta_k<\pi/2$.
This is equivalent to $\cos{\bTheta}\left(\bQ,\bW_k\right)$ being nonsingular, and $\rank(\bW_k^T\bQ)=k$.
Then \cite[Section 3]{ZKny13} implies that the tangents of the principal angles satisfy
\begin{eqnarray}
\nonumber \|\tan{\bTheta}\left(\bQ,\bW_k\right)\|_{2,F}&=&\|\sin{\bTheta\left(\bQ,\bW_k\right)}(\cos{\bTheta\left(\bQ,\bW_k\right)})^{\dagger}\|_{2,F}\\
\label{eqn:pdii1}&=& \left\|(\bI-\bW_k\bW_k^T)\bQ\,\left(\bW_k^T\bQ\right)^{\dagger}\right\|_{2,F}.
\end{eqnarray}
As above,  $\|\tan{\bTheta\left(\bQ,\bW_k\right)}\|_2=\tan{\theta_k}$, so the two norm tangent
is determined by the largest principal angle. The following lemma will be used in subsequent derivations,
so we include its simple proof.

\begin{lemma}[Theorem 3.1 in \cite{ZKny13}] \label{l_2}
Let $\bQ\in\real^{n\times s}$ have orthonormal columns, and let
$\bW\equiv\begin{pmatrix} \bW_k & \bW_{k,\perp}\end{pmatrix}\in\real^{n\times n}$
be an orthogonal matrix where $\bW_k\in\real^{n\times k}$ with $k\leq s$.
If $\rank(\bW_k^T\bQ)=k$ then
$$\|\tan{\bTheta(\bQ,\bW_k)}\|_{2,F} =\|(\bW_{k,\perp}^T\bQ)\,(\bW_k^T\bQ)^{\dagger}\|_{2,F}.$$
\end{lemma}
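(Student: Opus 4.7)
The plan is to chain two short observations on top of the identity~(\ref{eqn:pdii1}), which already reads
$$\|\tan{\bTheta(\bQ,\bW_k)}\|_{2,F}=\|(\bI-\bW_k\bW_k^T)\bQ\,(\bW_k^T\bQ)^{\dagger}\|_{2,F}.$$
Since the proof of~(\ref{eqn:pdii1}) has been done above, the work that remains is purely algebraic.

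First, I would exploit that $\bW=\begin{pmatrix}\bW_k & \bW_{k,\perp}\end{pmatrix}$ is orthogonal, so that
$$\bI=\bW\bW^T=\bW_k\bW_k^T+\bW_{k,\perp}\bW_{k,\perp}^T,$$
which turns the orthogonal projector onto $\range(\bW_k)^{\perp}$ into $\bI-\bW_k\bW_k^T=\bW_{k,\perp}\bW_{k,\perp}^T$. Substituting this into the right-hand side of~(\ref{eqn:pdii1}) yields
$$\|\tan{\bTheta(\bQ,\bW_k)}\|_{2,F}=\|\bW_{k,\perp}\bW_{k,\perp}^T\bQ\,(\bW_k^T\bQ)^{\dagger}\|_{2,F}.$$

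Second, I would strip off the leading factor $\bW_{k,\perp}$ using unitary invariance: the matrix $\bW_{k,\perp}\in\real^{n\times(n-k)}$ has orthonormal columns, and both the two norm and the Frobenius norm are invariant under multiplication by a matrix with orthonormal columns. This immediately gives
$$\|\bW_{k,\perp}\bW_{k,\perp}^T\bQ\,(\bW_k^T\bQ)^{\dagger}\|_{2,F}=\|\bW_{k,\perp}^T\bQ\,(\bW_k^T\bQ)^{\dagger}\|_{2,F},$$
which is the claimed identity.

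There is no real obstacle here: the assumption $\rank(\bW_k^T\bQ)=k$ is needed only to guarantee that $\cos{\bTheta(\bQ,\bW_k)}$ is nonsingular and hence that $(\bW_k^T\bQ)^{\dagger}$ and the tangent are well-defined, and that is precisely the hypothesis under which~(\ref{eqn:pdii1}) was established. The whole argument is two lines once one recognizes the complementary-projector identity.
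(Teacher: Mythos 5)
Your proposal is correct and follows essentially the same route as the paper's proof: it substitutes $\bI-\bW_k\bW_k^T=\bW_{k,\perp}\bW_{k,\perp}^T$ into the identity~(\ref{eqn:pdii1}) and then removes the leading $\bW_{k,\perp}$ factor via the unitary invariance of the two and Frobenius norms. Nothing is missing; the role you ascribe to the hypothesis $\rank(\bW_k^T\bQ)=k$ matches the paper's use of it.
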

\medskip

\begin{proof}
From (\ref{eqn:pdii1}) and $\bI=\bW_k\bW_k^T+\bW_{k,\perp}\bW_{k,\perp}^T$  follows
\begin{eqnarray*}
\|\tan{\bTheta}\left(\bQ,\bW_k\right)\|_{2,F}&=& \left\|(\bI-\bW_k\bW_k^T)\bQ\,\left(\bW_k^T\bQ\right)^{\dagger}\right\|_{2,F}\\
&=& \left\|\bW_{k,\perp}\bW_{k,\perp}^T\bQ\,\left(\bW_k^T\bQ\right)^{\dagger}\right\|_{2,F}\\
&=& \left\|\bW_{k,\perp}^T\bQ\,\left(\bW_k^T\bQ\right)^{\dagger}\right\|_{2,F}.
\end{eqnarray*}
The last equality follows from the unitary invariance of the two and the Frobenius norms.
\end{proof}

\subsection{Gap-amplifying and Chebyshev polynomials}\label{sxn:cheb}
\label{section:gap-amplifying}
We generalize the Chebyshev-based
\textit{gap-amplifying polynomials} in \cite[Section 4.4]{MM2015}, \cite[Section 2.2]{Wang2015}.
Given an integer $q\geq 1$, define the polynomial
$$\psi_{q^{\prime}}(x) = \psi_{2q+1}(x)=\sum_{j=0}^q a_{2j+1}x^{2j+1}$$
of degree  $q^{\prime}=2q+1$ with only odd powers of $x$. The polynomial $\psi$
is \textit{gap-amplifying} if it satisfies three properties:
\begin{enumerate}
\item Small input values remain small,
$$\psi_{q'}(1)=1, \qquad \text{and} \qquad |\psi_{q'}(x)|\leq 1\quad \text{for}\ x\in[0,1].$$
\item Large input values are amplified,
$$\psi_{q'}(x)\geq x\, c\,2^{q' \, r(x)}\quad \text{for} \ x\geq 1,$$
where the constant $c$ and the function $r(x)$ are parameters of $\psi$.
\item Super linear growth for large input values,
$$\frac{\psi_{q'}(x)}{x}\geq\frac{\psi_{q'}(y)}{y}\quad \text{for}\  x\geq y\geq 1.$$
\end{enumerate}

The simplest gap-amplifying polynomial is a Chebyshev polynomial.
\begin{lemma}[Lemma 5 in \cite{MM2015}]\label{lemma:cheb}
The Chebyshev polynomial  $T_{q'}(x)$ of the first kind contains only odd powers of $x$ and is gap-amplifying with $c=1/4$ and $r(x)=\min\{\sqrt{x-1},1\}$.
\end{lemma}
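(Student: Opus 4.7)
The plan is to verify the parity claim together with the three properties of a gap-amplifying polynomial. All arguments rest on the two standard representations of Chebyshev polynomials: the trigonometric form $T_n(\cos\theta) = \cos(n\theta)$ on $[-1,1]$ and the hyperbolic form $T_n(\cosh\theta) = \cosh(n\theta)$ on $[1,\infty)$.

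The parity claim and property~1 are essentially immediate. Since $T_n(-x) = (-1)^n T_n(x)$ and $q' = 2q+1$ is odd, $T_{q'}$ contains only odd powers. Moreover $T_{q'}(1) = \cos(0) = 1$, and $|T_{q'}(x)| = |\cos(q'\theta)| \leq 1$ for $x = \cos\theta \in [0,1]$.

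For property~3 (super-linear growth) I would work with the hyperbolic form. Writing $x = \cosh\theta$, a direct computation reduces the inequality $(T_n(x)/x)' \geq 0$ to $n\coth\theta \geq \coth(n\theta)$ for $\theta > 0$, which holds because $\coth$ is strictly decreasing on $(0,\infty)$, giving $\coth(n\theta) \leq \coth\theta \leq n\coth\theta$. This establishes super-linearity for $x > 1$; the boundary case $x = y$ is trivial.

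Property~2 (amplification) is the main step, and I would split it at $x = 2$. The baseline estimate
$$T_{q'}(x) = \cosh(q'\theta) \geq \tfrac{1}{2}e^{q'\theta} = \tfrac{1}{2}\bigl(x + \sqrt{x^2-1}\bigr)^{q'}$$
is available throughout $[1,\infty)$. For $1 \leq x \leq 2$, where $r(x) = \sqrt{x-1}$, I would chain the inequalities $x + \sqrt{x^2-1} \geq 1 + \sqrt{2(x-1)} \geq 2^{\sqrt{x-1}}$. The first step uses $\sqrt{x^2-1} = \sqrt{(x-1)(x+1)} \geq \sqrt{2(x-1)}$ together with $x \geq 1$; the second reduces, after setting $u = \sqrt{x-1}$, to the one-variable estimate $1 + \sqrt{2}\,u \geq 2^u$ on $[0,1]$, which I would verify by checking the endpoints and the sign of $\sqrt{2} - 2^u\ln 2$ (positive throughout). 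Combining these gives $T_{q'}(x) \geq \tfrac{1}{2}\cdot 2^{q'\sqrt{x-1}} \geq (x/4)\cdot 2^{q'\sqrt{x-1}}$, since $x \leq 2$. For $x \geq 2$, where $r(x) = 1$, I would invoke super-linearity (already proved) at $y = 2$ to obtain $T_{q'}(x)/x \geq T_{q'}(2)/2 \geq (1/4)\cdot 2^{q'}$, reusing the endpoint of the first case.

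The main obstacle is the constant calibration in property~2: producing a lower bound on $x + \sqrt{x^2-1}$ of the form $2^{r(x)}$ with a clean multiplicative constant that absorbs both the factor $1/2$ from the $\cosh$-to-$\exp$ estimate and the target factor $x/4$. The elementary inequality $1 + \sqrt{2}\,u \geq 2^u$ on $[0,1]$ is the crucial calculation that makes $c = 1/4$ work, and routing the $x \geq 2$ regime through super-linearity rather than through a direct bound on $(x+\sqrt{x^2-1})^{q'}$ keeps the argument free of a second, messier algebraic estimate.
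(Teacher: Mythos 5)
Your proof is correct, and it is more self-contained than the paper's, which is only a sketch delegating the two hard ingredients to Lemma 5 of the cited Musco--Musco paper: the paper takes for granted both the estimate $T_{q'}(x)\geq\tfrac12\,2^{q'\sqrt{x-1}}$ on $[1,2]$ and the derivative bound underlying Property 3, and contributes only the final assembly (split at $x=2$, use $x\le 2$ to convert the constant $\tfrac12$ into the factor $\tfrac{x}{4}$, and invoke Property 3 at $y=2$ for $x\ge 2$). Your Property 2 assembly is exactly that same assembly, so the high-level route coincides; what you add is a complete derivation of the imported pieces. Your Property 3 argument --- writing $x=\cosh\theta$ and reducing $\bigl(T_n(x)/x\bigr)'\ge 0$ to $n\coth\theta\ge\coth(n\theta)$, which follows since $\coth$ is decreasing and positive on $(0,\infty)$ --- is a clean, direct monotonicity proof that replaces the paper's mean-value-theorem reduction to an unverified inequality for $T'_{q'}$; and your chain $x+\sqrt{x^2-1}\ge 1+\sqrt{2(x-1)}\ge 2^{\sqrt{x-1}}$, resting on $1+\sqrt2\,u\ge 2^u$ for $u\in[0,1]$ (valid because $2^u\ln 2\le 2\ln 2<\sqrt2$), correctly reproduces the $\tfrac12\,2^{q'\sqrt{x-1}}$ bound from $\cosh(q'\theta)\ge\tfrac12 e^{q'\theta}=\tfrac12\bigl(x+\sqrt{x^2-1}\bigr)^{q'}$; the parity and Property 1 claims are immediate as you say. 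In short: the paper's version is shorter because it reuses the cited result and only repairs the missing factor of $x$; yours stands alone and would survive without access to that reference.
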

\medskip

\begin{proof}
We give a quick sketch of the proof of \cite[Lemma 5]{MM2015}).
Clearly, the Chebyshev polynomial $T_{q'}(x)$ satisfies Property 1.
To prove Property 3, it suffices to show
\begin{eqnarray}\label{lemma:cheb:1}
T^{\prime}_{q^{\prime}}(z)\geq \frac{T_{q^{\prime}}(y)}{y} \quad \text{for some}\ z\geq y\geq 1,
\end{eqnarray}
because the mean value theorem implies there exists a $z\in[y,x]$ with
\begin{eqnarray*}
T_{q^{\prime}}(x)&=&T_{q^{\prime}}(y+x-y)=T_{q'}(y)+T^{\prime}_{q^{\prime}}(z)(x-y)\\
&\geq& T_{q^{\prime}}(y)+\frac{T_{q^{\prime}}(y)}{y}(x-y)=
x\frac{T_{q^{\prime}}(y)}{y}.
\end{eqnarray*}
Our proof of Property 2 corrects a small typo in a similar proof  in~\cite{MM2015}.
Although a bound equivalent to Property 2 is claimed in
\cite[Lemma 5]{MM2015} it is  only proved that $T_{q'}(x)\geq c\,2^{q'\, r(x)}$.
However, only a slight modification is required for the stronger result. The proof of
\cite[Lemma 5]{MM2015} shows that
\begin{eqnarray*}
T_{q^{\prime}}(x)\geq \tfrac{1}{2}\,2^{q^{\prime}\, \sqrt{x-1}} \quad \text{for}\ 1\leq x\leq 2.
\end{eqnarray*}
To derive a lower bound on $T_{q^{\prime}}(x)/x$ for $x\geq 1$,
first consider $1\leq x\leq 2$, where
$$\frac{T_{q'}(x)}{x}\geq \frac{T_{q^{\prime}}(x)}{2}\geq \tfrac{1}{4}\, 2^{q^{\prime} \sqrt{x-1}}.$$
For the remaining case $x>2$, Property 3 implies
$$\frac{T_{q^{\prime}}(x)}{x}\ge\frac{T_{q^{\prime}}(2)}{2}\geq
\tfrac{1}{4}\, 2^{q^{\prime}}.$$
Hence
$T_{q^{\prime}}(x)\geq  \frac{x}{4}\, 2^{q^{\prime}\,\min\{\sqrt{x-1},1\}}$.
\end{proof}
\medskip

For our analysis, we use a rescaled version of the
gap-amplifying polynomial $T_{q^{\prime}}(x)$, which has similar
properties to the original.

\begin{lemma}\label{lem:prop1}
Let
\begin{equation}\label{eqn:phi1}
\phi(x)=\frac{(1+\gamma)\,\alpha}{\psi_{q'}\left(1+\gamma\right)}\psi_{q'}(x/\alpha)
\end{equation}
be the rescaled gap-amplifying polynomial. Then
$$\abs{\phi(x)} \leq \frac{4\alpha}{2^{q'\,\min
\left\{\sqrt{\gamma},1\right\}}}\qquad \text{for}\ 0\leq x\leq \alpha.$$
\end{lemma}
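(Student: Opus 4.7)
The plan is a direct application of the three defining properties of a gap-amplifying polynomial combined with the specific constants from the Chebyshev case.

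First I would rewrite $\phi(x)$ as a product of a numerator depending on $x$ and a denominator depending on $\gamma$, i.e., treat
$$\phi(x) = \alpha \cdot \frac{(1+\gamma)\,\psi_{q'}(x/\alpha)}{\psi_{q'}(1+\gamma)},$$
so the required bound reduces to bounding the numerator from above and the denominator from below.

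Next, for the numerator, I would note that $0 \le x \le \alpha$ forces $x/\alpha \in [0,1]$, and then invoke Property~1 of a gap-amplifying polynomial to conclude $|\psi_{q'}(x/\alpha)| \le 1$. Thus
$$|\phi(x)| \leq \frac{(1+\gamma)\,\alpha}{\psi_{q'}(1+\gamma)}.$$

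Then, for the denominator, I would apply Property~2 at the point $1+\gamma \ge 1$, which gives $\psi_{q'}(1+\gamma) \ge (1+\gamma)\,c\,2^{q'\,r(1+\gamma)}$. Using Lemma~\ref{lemma:cheb} to identify the constants for the Chebyshev polynomial, namely $c = 1/4$ and $r(1+\gamma) = \min\{\sqrt{\gamma},1\}$, I obtain
$$\psi_{q'}(1+\gamma) \ge \tfrac{1}{4}(1+\gamma)\,2^{q'\,\min\{\sqrt{\gamma},1\}}.$$

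Finally I would substitute this lower bound into the previous display; the factor $(1+\gamma)$ cancels between numerator and denominator, leaving
$$|\phi(x)| \le \frac{4\alpha}{2^{q'\,\min\{\sqrt{\gamma},1\}}},$$
which is the stated claim. There is no real obstacle here beyond bookkeeping: the rescaling in~(\ref{eqn:phi1}) was chosen precisely so that Properties~1 and~2 of the gap-amplifying polynomial, applied on the respective intervals $[0,1]$ and $\{1+\gamma\}$, combine to yield a clean bound with the Chebyshev constants from Lemma~\ref{lemma:cheb}.
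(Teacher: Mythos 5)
Your proof is correct and follows essentially the same route as the paper: bound the numerator via Property~1 ($|\psi_{q'}(x/\alpha)|\leq 1$ on $[0,1]$) and lower-bound the denominator via Property~2 at $1+\gamma$ with the Chebyshev constants $c=1/4$ and $r(1+\gamma)=\min\{\sqrt{\gamma},1\}$, after which the factor $(1+\gamma)$ cancels. Nothing is missing.
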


\begin{proof}
The proof is immediate since
$\abs{\psi_{q^{\prime}}(x/\alpha)}\leq 1$ for  $0\leq x\leq \alpha$,
and Property 2 implies
$$\psi_{q^{\prime}}\left(1+\gamma\right) \geq
\frac{(1+\gamma)}{4}\, 2^{q^{\prime}\, \min\left\{\sqrt{\gamma},1\right\}}.$$
\end{proof}
\medskip

\begin{lemma}\label{lem:prop2}
The rescaled gap-amplifying polynomial $\phi(x)$ in (\ref{eqn:phi1}) satisfies
$$\phi(x)\geq x \qquad \text{for}~ x\geq (1+\gamma) \alpha.$$
\end{lemma}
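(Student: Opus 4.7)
The plan is to reduce the desired inequality to Property 3 (super-linear growth) of the gap-amplifying polynomial $\psi_{q'}$, which is the only nontrivial property that has not already been used in the previous lemma.

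First I would substitute $y = x/\alpha$, so that the hypothesis $x \geq (1+\gamma)\alpha$ becomes $y \geq 1+\gamma$. Since $\gamma > 0$, we have $y \geq 1+\gamma \geq 1$, so both $y$ and $1+\gamma$ lie in the regime $[1,\infty)$ where Property 3 is applicable. Dividing both sides of the target inequality $\phi(x) \geq x$ by $x > 0$ and unfolding the definition of $\phi$ from \eqref{eqn:phi1}, the claim is equivalent to
\begin{equation*}
\frac{\psi_{q'}(y)}{y} \;\geq\; \frac{\psi_{q'}(1+\gamma)}{1+\gamma}.
\end{equation*}

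Next I would invoke Property 3 of gap-amplifying polynomials with the two arguments $y \geq 1+\gamma \geq 1$; this yields exactly the displayed inequality, completing the proof. There is no real obstacle here: essentially all the work has already been packaged into the definition of a gap-amplifying polynomial. The only thing to be careful about is that we need $1+\gamma \geq 1$ (which is immediate from $\gamma > 0$, guaranteed by \eqref{eqn:gamma}) so that Property 3 can be legitimately applied; and that $x > 0$ when dividing, which follows since $x \geq (1+\gamma)\alpha > 0$ (with $\alpha > 0$, as $\alpha$ plays the role of a singular-value scale). The proof is therefore a single-line reduction to Property 3.
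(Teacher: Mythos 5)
Your proposal is correct and is essentially the paper's own proof: both reduce the claim, after substituting $x/\alpha$ and unfolding the definition of $\phi$ in (\ref{eqn:phi1}), to Property 3 of the gap-amplifying polynomial applied with arguments $x/\alpha \geq 1+\gamma \geq 1$. No further comment is needed.
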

\begin{proof}
Property 3 implies
$$\frac{\psi_{q^{\prime}}(x/\alpha)}{x/\alpha} \geq
\frac{\psi_{q^{\prime}}(1+\gamma)}{1+\gamma}
\qquad \text{for}~ x\geq (1+\gamma)\alpha.$$
Now rearrange terms and apply the definition of $\phi(x)$ in (\ref{eqn:phi1}).
\end{proof}

\subsection{Proof of Lemma~\ref{lem:l1}}\label{s_l1proof}
Let \math{\phi(x)} be the rescaled gap-amplifying polynomial in (\ref{eqn:phi1}) with
$\alpha=\sigma_{k+1}$ and $\gamma$ in (\ref{eqn:gamma}).
The inequalities
for $\phi(\sigma_i)$ follow from $q'=2q+1$, Lemma~\ref{lem:prop1} and
Lemma~\ref{lem:prop2}.

From $\phi(\sigma_i)>0$ for $1\leq i\leq k$ and $\phi(\bSigma_k)$ being
a diagonal matrix follows
$$\|\phi(\bSigma_k)^{-1}\|_2=\max_{1\leq i\leq k}{\phi(\sigma_i)^{-1}}
\leq \max_{1\leq i\leq k}{\sigma_i}^{-1}=\sigma_k^{-1}.$$
Furthermore
\begin{eqnarray*}
\|\phi(\bSigma_{k,\perp})\|_2=\max_{i\geq k+1}{|\phi\left(\sigma_i\right)|}\leq
\frac{4\sigma_{k+1}}{2^{(2q+1)\, \min{\left\{\sqrt{\gamma},1\right\}}}}.
\end{eqnarray*}

\section{Proof of Theorem~\ref{t_2}, for general and orthonormal $\bX$}\label{sxn:prooft2}
We focus on the case where $\bX$ is a general matrix, or has orthonormal columns, and
postpone the technicalities required for the full-column rank case to Appendix~\ref{s_prooft2a}.

The first and most critical step of the proof makes a connection between principal angles
and least-squares residuals.

\paragraph{Viewing the sine as a least squares residual}
Let $\mathcal{P}_q$ be the orthogonal projector onto the Krylov space $\mathcal{K}_q$.
For $\bPhi$ in (\ref{eqn:Phidef}) let
$\bPhi\bPhi^{\dagger}$ be the orthogonal projector onto $\range(\bPhi)$,
with $\range(\bPhi\bPhi^{\dagger})\subset\range(\mathcal{P}_q)$ due to
(\ref{eqn:rangeK}). Hence, (\ref{e_dist}) implies
\begin{eqnarray}\label{e_sin}
\|\sin{\bTheta(\mathcal{K}_q,\bU_k)}\|_{2,F}&=&\|(\bI-\mathcal{P}_q)\,\bU_k\|_{2,F}\leq
\|(\bI-\bPhi\bPhi^{\dagger})\,\bU_k\|_{2,F}.
%&=& \|\sin{\bTheta(\bPhi,\bU_k)}\|_{2,F}.
\end{eqnarray}
Lemma~\ref{lem:2norm} implies that
$\|(\bI-\bPhi\bPhi^{\dagger})\,\bU_k\|_{2,F}$ is the residual
of the least squares problem
$$\|(\bI-\bPhi\bPhi^{\dagger})\,\bU_k\|_{2,F}=\min_{\bPsi}{\|\bU_k-\bPhi\bPsi\|_{2,F}}
=\|\bU_k-\bPhi\bPsi_{opt}\|_{2,F},$$
where $\bPsi_{opt}=\bPhi^{\dagger}\bU_k$ is a least squares solution.

\paragraph{Focussing on the target space}
Decompose $\bPhi$ into the target component $\range(\bU_k)$ and the complementary subspace,
$\bPhi=\bPhi_k+\bPhi_{k,\perp}$, where
\begin{eqnarray}\label{e_phidecomp}
\bPhi_k\equiv\bU_k\phi(\bSigma_k)\bV_k^T\bX, \qquad
\bPhi_{k,\perp}\equiv \bU_{k,\perp}\phi(\bSigma_{k,\perp})\bV_{k,\perp}^T\bX.
\end{eqnarray}
From $\rank(\bV_k^T\bX)=k$ follows that $(\bV_k^T\bX)^{\dagger}$ is a right inverse,
$(\bV_k^T\bX) (\bV_k^T\bX)^{\dagger}=\bI_k$. With (\ref{eqn:pinv}) this gives
\begin{eqnarray}\label{e_proj}
\bPhi_k^{\dagger}= (\bV_{k}^T\bX)^\dagger \phi(\bSigma_k)^{-1}\bU_k^T
\quad \text{and} \quad
\bPhi_k\bPhi_k^{\dagger}=\bU_k\bU_k^T,
\end{eqnarray}
meaning $\bPhi_k\bPhi_k^{\dagger}$ is the orthogonal projector onto the target space
$\range(\bU_k)$.
The minimality of the least squares residual implies
\begin{eqnarray*}\label{e_ls}
\|(\bI-\bPhi\bPhi^{\dagger})\,\bU_k\|_{2,F}&=&\|\bU_k-\bPhi\,(\bPhi^{\dagger}\bU_k)\|_{2,F}\nonumber\\
&\leq&  \|\bU_k-\bPhi\,(\bPhi_k^{\dagger}\bU_k)\|_{2,F}
=\|(\bI-\bPhi\bPhi_k^{\dagger})\,\bU_k\|_{2,F}.
\end{eqnarray*}
Now replace the other instance of $\bPhi$ by (\ref{e_phidecomp}), and use (\ref{e_proj})
to simplify
\begin{eqnarray}\label{e_ls1}
\|(\bI-\bPhi\bPhi^{\dagger})\,\bU_k\|_{2,F}&\leq &\|(\bI-\bPhi\bPhi_k^{\dagger})\,\bU_k\|_{2,F}
=\|(\bI-\bPhi_k\bPhi_k^{\dagger})\bU_k-\bPhi_{k,\perp}\bPhi_k^{\dagger}\bU_k\|_{2,F}\nonumber\\
&=&\|(\bI-\bU_k\bU_k^{T})\bU_k-\bPhi_{k,\perp}\bPhi_k^{\dagger}\bU_k\|_{2,F}\nonumber\\
&=&\|\bPhi_{k,\perp}\bPhi_k^{\dagger}\bU_k\|_{2,F}.
\end{eqnarray}

\paragraph{Summary so far}
Combining  (\ref{e_sin}) with (\ref{e_ls1}) gives
\begin{eqnarray*}
\|\sin{\bTheta(\mathcal{K}_q,\bU_k)}\|_{2,F}\leq\|\bPhi_{k,\perp}\bPhi_k^{\dagger}\bU_k\|_{2,F}.
\end{eqnarray*}

\paragraph{Extracting the polynomials}
The expressions for $\bPhi_{k,\perp}$ in (\ref{e_phidecomp}) and
$\bPhi_k^{\dagger}$ in (\ref{e_proj}), and submultiplicativity (Section~\ref{sxn:la}) yield
\begin{eqnarray*}
\|\bPhi_{k,\perp}\bPhi_k^{\dagger}\bU_k\|_{2,F}&=&
\|\bU_{k,\perp}\,\phi(\bSigma_{k,\perp})\,\bV_{k,\perp}^T\bX(\bV_{k}^T\bX)^{\dagger}\,\phi(\bSigma_k)^{-1}\bU_k^T\bU_k\|_{2,F}\\
&=& \|\phi(\bSigma_{k,\perp})\,\bV_{k,\perp}^T\bX(\bV_{k}^T\bX)^{\dagger}\,\phi(\bSigma_k)^{-1}\|_{2,F}\\
&\leq &\|\phi(\bSigma_{k,\perp})\|_2\,\|\phi(\bSigma_k)^{-1}\|_2\>\|\bV_{k,\perp}^T\bX(\bV_{k}^T\bX)^{\dagger}\|_{2,F}.
\end{eqnarray*}
Combining the previous two sets of inequalities gives
\begin{eqnarray*}
\|\sin{\bTheta(\mathcal{K}_q,\bU_k)}\|_{2,F}\leq
\|\phi(\bSigma_{k,\perp})\|_2\,\|\phi(\bSigma_k)^{-1}\|_2\>\|\bV_{k,\perp}^T\bX(\bV_{k}^T\bX)^{\dagger}\|_{2,F}.
\end{eqnarray*}
This concludes the proof for general $\bX$. The proof
for the special case where $\bX$ has
linearly independent columns follows from Lemma~\ref{l_2}.

\section{Proof of Theorem~\ref{t_3}}\label{s_prooft3}
The proof imitates that of Theorem~\ref{t_2},
and simply substitutes the vectors $\bu_i$ for the matrix $\bU_k$.
Note that $(\bI-\bU_k \bU_k^T)\bu_i=0$ for $1\leq i \leq k$, which implies
\begin{eqnarray*}
|\sin{\bTheta(\mathcal{K}_q,\bu_i)}|\leq \|\bPhi_{k,\perp}\bPhi_k^{\dagger}\bu_i\|_{2},
\qquad 1\leq i\leq k.
\end{eqnarray*}
The expressions for $\bPhi_{k,\perp}$ in (\ref{e_phidecomp}) and
$\bPhi_k^{\dagger}$ in (\ref{e_proj}), and submultiplicativity yield
\begin{eqnarray*}
\|\bPhi_{k,\perp}\bPhi_k^{\dagger}\bu_k\|_{2}&=&
\|\bU_{k,\perp}\,\phi(\bSigma_{k,\perp})\,\bV_{k,\perp}^T\bX(\bV_{k}^T\bX)^{\dagger}\,\phi(\bSigma_k)^{-1}\bU_k^T\bu_i\|_{2}\\
&=& \|\phi(\bSigma_{k,\perp})\,\bV_{k,\perp}^T\bX(\bV_{k}^T\bX)^{\dagger}\,\phi(\sigma_i)^{-1}\|_2\\
&\leq &\|\phi(\bSigma_{k,\perp})\|_2\,|\phi(\sigma_i)^{-1}|\>\|\bV_{k,\perp}^T\bX(\bV_{k}^T\bX)^{\dagger}\|_{2}.
\end{eqnarray*}
Combining the previous two sets of inequalities gives
\begin{eqnarray*}
|\sin{\bTheta(\mathcal{K}_q,\bu_i)}|\leq
\|\phi(\bSigma_{k,\perp})\|_2\,|\phi(\sigma_i)^{-1}|\>\|\bV_{k,\perp}^T\bX(\bV_{k}^T\bX)^{\dagger}\|_{2}.
\end{eqnarray*}
This concludes the proof of the case for general $\bX$.
The proof for the special case where $\bX$ has orthonormal columns
follows from Lemma~\ref{l_2}.

\section{Proof of Theorem~\ref{t_4}}\label{s_prooft4}
This proof is more involved than the previous ones,
and requires two auxiliary results, an alternative
expression for the error (Section~\ref{s_aux1}), and
a bound on its Frobenius norm (Section~\ref{s_aux2}).

\subsection{An alternative expression for the error}\label{s_aux1}
Algorithm~\ref{alg:BL} approximates the dominant left singular vectors of $\bA$
by the orthonormal matrix $\hat\bU_k \in \real^{m \times k}$.
Since bounding $\|\bA-\hat\bU_k\hat\bU_k^T\bA\|_F$ seems hard, we
present an alternative expression that is easier to analyze.

\begin{lemma}[Lemma 8 in \cite{BDM2014}]\label{lem:restate}
Let $\bU_K$ be an orthonormal basis for $\mathcal{K}_q$ and let $\hat{\bU}_i$ be as in (\ref{e_hatui}), containing the top $i$ columns of the output of Algorithm~\ref{alg:BL}. Then
\begin{equation}
\bA-\hat\bU_i\hat\bU_i^T\bA = \bA-\bU_K \left(\bU_K^T\bA\right)_i,
\qquad 1\leq i\leq k.
\end{equation}
In addition,  $\bU_K \left(\bU_K^T\bA\right)_i$ is a best rank-$i$ approximation to
$\bA$ from $\mathcal{K}_q$ in the Frobenius norm,
\begin{equation}\label{eqn:opt1}
\FNormS{\bA-\bU_K \left(\bU_K^T\bA\right)_i} =
\min_{\rank(\bY)\leq i}\FNormS{\bA - \bU_K \bY}, \qquad 1\leq i \leq k.
\end{equation}
\end{lemma}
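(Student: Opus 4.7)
The plan is to unpack the algorithmic definition of $\hat\bU_i$ and then combine matrix Pythagoras (Lemma~\ref{l_pyth}) with Eckart--Young. From Algorithm~\ref{alg:BL}, $\hat\bU_k=\bU_K\bU_{W,k}$, and since an SVD lists singular vectors in order of decreasing singular values, the first $i$ columns of $\hat\bU_k$ satisfy $\hat\bU_i=\bU_K\bU_{W,i}$, where $\bU_{W,i}\in\real^{(q+1)s\times i}$ is the orthonormal basis for the top $i$ left singular vectors of $\bW\equiv\bU_K^T\bA$. A direct SVD computation shows $\bU_{W,i}\bU_{W,i}^T\bW$ coincides with $(\bU_K^T\bA)_i$, the best rank-$i$ approximation to $\bW$ in the Frobenius norm. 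Substituting would yield
$$\hat\bU_i\hat\bU_i^T\bA=\bU_K\,\bU_{W,i}\bU_{W,i}^T\,\bW=\bU_K\,(\bU_K^T\bA)_i,$$
which is the first identity after subtracting both sides from $\bA$.

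For the optimality claim~(\ref{eqn:opt1}), my plan is to write an arbitrary candidate $\bU_K\bY$ with $\rank(\bY)\leq i$ via the orthogonal projector $\bU_K\bU_K^T$ onto $\mathcal{K}_q$:
$$\bA-\bU_K\bY=(\bI-\bU_K\bU_K^T)\bA+\bU_K(\bU_K^T\bA-\bY).$$
The first summand has columns in $\range(\bU_K)^{\perp}$ while the second has columns in $\range(\bU_K)$, so the transposed product of one against the other vanishes, and Lemma~\ref{l_pyth} applies to give
$$\FNormS{\bA-\bU_K\bY}=\FNormS{(\bI-\bU_K\bU_K^T)\bA}+\FNormS{\bU_K^T\bA-\bY},$$
using orthonormality of $\bU_K$. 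The first term does not depend on $\bY$, so minimizing the left-hand side over $\rank(\bY)\leq i$ is equivalent to minimizing $\FNormS{\bW-\bY}$. By Eckart--Young the minimizer is $\bY=(\bU_K^T\bA)_i$, which establishes (\ref{eqn:opt1}).

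I do not anticipate a genuine obstacle. The one place that calls for care is the bookkeeping identification $\hat\bU_i=\bU_K\bU_{W,i}$, which relies on $\bU_{W,i}$ being exactly the prefix of $\bU_{W,k}$ selected by taking the first $i$ columns; this is consistent with the standard ordering of singular values. Beyond this, the proof is a sequence of rewrites: Eckart--Young is invoked twice (once to recognize $\bU_{W,i}\bU_{W,i}^T\bW$ as $(\bU_K^T\bA)_i$, and once to solve the reduced rank-$i$ approximation of $\bW$), and matrix Pythagoras handles the decomposition of the error into orthogonal pieces.
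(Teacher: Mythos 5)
Your proposal is correct, and where it overlaps with the paper's proof it is the same argument: the paper proves only the first identity, and only for $i=k$, by writing $\hat\bU_k\hat\bU_k^T\bA=\bU_K\bW_k\bW_k^{\dagger}\bW=\bU_K\bW_k$ with $\bW=\bU_K^T\bA$; your computation $\hat\bU_i\hat\bU_i^T\bA=\bU_K\bU_{W,i}\bU_{W,i}^T\bW=\bU_K(\bU_K^T\bA)_i$ is the same idea, carried out for every $1\leq i\leq k$ rather than just $i=k$. The optimality statement (\ref{eqn:opt1}) is not proved in the paper at all --- it is imported wholesale from \cite{BDM2014} --- whereas you supply the standard argument: the split $\bA-\bU_K\bY=(\bI-\bU_K\bU_K^T)\bA+\bU_K(\bU_K^T\bA-\bY)$, matrix Pythagoras (Lemma~\ref{l_pyth}) together with the unitary invariance coming from the orthonormal columns of $\bU_K$, and Eckart--Young applied to $\bW$. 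This is correct and makes the lemma self-contained, which is a genuine gain over the paper's citation-based treatment. The only point worth flagging is the bookkeeping you already noted: $\hat\bU_i=\bU_K\bU_{W,i}$ and $\bU_{W,i}\bU_{W,i}^T\bW=(\bU_K^T\bA)_i$ presuppose that the columns of $\bU_{W,k}$ are ordered by decreasing singular values of $\bW$ and, in case of a tie $\sigma_i(\bW)=\sigma_{i+1}(\bW)$, that $(\bU_K^T\bA)_i$ denotes the truncation taken from the algorithm's own SVD of $\bW$ (a best rank-$i$ approximation, then, rather than a unique one); the same convention is implicit in the paper, so this is not a gap.
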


\begin{proof}
Since the transition to best rank-$i$ approximations is a key component, we illustrate
how it comes about by proving the first assertion for the case $i=k$.

Algorithm~\ref{alg:BL} outputs $\hat{\bU}_k=\bU_K\bU_{W,k}$, where
$\bU_{W,k}$ is the matrix of the dominant $k$ left singular vectors of
$\bW = \bU_K^T\bA$. This means $\bU_{W,k}$ spans the same range as $\bW_k$,
the best rank-$k$ approximation to $\bW$. Therefore
\begin{eqnarray*}
\bA-\hat\bU_k\hat\bU_k^T\bA &=& \bA-\bU_K \bU_{W,k}\bU_{W,k}^T\bU_K^T\bA\\
 &=& \bA-\bU_K \bW_k \bW_k^{\dagger} \bW = \bA-\bU_K \bW_k.
\end{eqnarray*}
The last equality follows from $\bW_k\bW_k^{\dagger}$ being the orthogonal projector onto
$\range(\bW_k)$.
\end{proof}

Lemma~\ref{lem:restate} shows that (\ref{eqn:th1}) in Theorem~\ref{t_4}
can be proved by bounding $\|\bA-\bU_K \left(\bU_K^T\bA\right)_i\|_F$.
Next we transition from the best rank-$i$ approximation of the "projected" matrix $(\bU_K^T\bA)_i$
to the best rank-$i$ approximation $\bA_i$ of the original matrix, by splitting for $1\leq i\leq k$,
\begin{eqnarray}\label{e_Ai}
\bA=\bA_i+ \bA_{i,\perp}\qquad \text{where} \quad\bA_i=\bU_i\bSigma_i\bV_i^T \  \text{and}\
\bA_{i,\perp}=\bU_{i,\perp}\bSigma_{i,\perp}\bV_{i,\perp}^T.
\end{eqnarray}

\begin{lemma}\label{l_aux2}
Let $\bU_K$ be an orthonormal basis for $\mathcal{K}_q$, and $\hat{\bU}_i$ in (\ref{e_hatui})
the columns of the output of Algorithm~\ref{alg:BL}. Then
$$\FNormS{\bA-\hat\bU_i\hat\bU_i^T\bA} \leq
 \FNormS{\bA_i-\bU_K \bU_K^T\bA_i} + \FNormS{\bA_{i,\perp}}.$$
\end{lemma}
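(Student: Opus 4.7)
The plan is to combine Lemma~\ref{lem:restate} (which rewrites the error and furnishes an optimality property) with matrix Pythagoras (Lemma~\ref{l_pyth}) applied to a carefully chosen splitting of the residual.

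First, I would invoke Lemma~\ref{lem:restate} to rewrite $\FNormS{\bA - \hat\bU_i\hat\bU_i^T\bA} = \FNormS{\bA - \bU_K(\bU_K^T\bA)_i}$, and to identify $\bU_K(\bU_K^T\bA)_i$ as the best rank-$i$ approximation to $\bA$ from $\mathcal{K}_q$ in the Frobenius norm. Since $\bA_i$ has rank at most $i$, so does $\bU_K^T\bA_i$, and the optimality property~(\ref{eqn:opt1}) applied with $\bY = \bU_K^T\bA_i$ yields
$$\FNormS{\bA - \hat\bU_i\hat\bU_i^T\bA} \;\leq\; \FNormS{\bA - \bU_K\bU_K^T\bA_i}.$$

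Next, using $\bA = \bA_i + \bA_{i,\perp}$ from (\ref{e_Ai}), I would split the remaining residual as
$$\bA - \bU_K\bU_K^T\bA_i \;=\; (\bA_i - \bU_K\bU_K^T\bA_i) \,+\, \bA_{i,\perp},$$
and apply matrix Pythagoras. The key algebraic identity is $\bA_{i,\perp}\bA_i^T = \bU_{i,\perp}\bSigma_{i,\perp}\bV_{i,\perp}^T\bV_i\bSigma_i^T\bU_i^T = \bzero$, a direct consequence of $\bV_{i,\perp}^T\bV_i = \bzero$. From this, cyclic invariance of the trace gives
$$\trace\!\left((\bA_i - \bU_K\bU_K^T\bA_i)^T \bA_{i,\perp}\right) \;=\; \trace(\bA_{i,\perp}\bA_i^T) \,-\, \trace(\bU_K^T\bA_{i,\perp}\bA_i^T\bU_K) \;=\; 0,$$
so the cross term in the Frobenius expansion vanishes and
$$\FNormS{\bA - \bU_K\bU_K^T\bA_i} \;=\; \FNormS{\bA_i - \bU_K\bU_K^T\bA_i} \,+\, \FNormS{\bA_{i,\perp}}.$$
Chaining this equality with the optimality inequality above yields the claim.

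The one point deserving care is that the matrix $(\bA_i - \bU_K\bU_K^T\bA_i)^T\bA_{i,\perp}$ is generally \emph{not} the zero matrix, so the hypothesis of Lemma~\ref{l_pyth} is not literally satisfied. The proof of Lemma~\ref{l_pyth} however needs only the weaker fact that the cross-term \emph{trace} vanishes, which is precisely what the identity $\bA_{i,\perp}\bA_i^T = \bzero$ provides. This mild observation — essentially an inspection of the proof of Lemma~\ref{l_pyth} — is the one place where the argument requires a slight strengthening over the stated hypothesis.
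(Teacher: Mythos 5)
Your proof follows the paper's argument exactly: invoke Lemma~\ref{lem:restate} and the optimality property (\ref{eqn:opt1}) with $\bY=\bU_K^T\bA_i$, then split the residual as $(\bA_i-\bU_K\bU_K^T\bA_i)+\bA_{i,\perp}$ and conclude by Pythagoras. Your closing observation is well taken and is in fact a point the paper glosses over --- the literal hypothesis $\bA^T\bB=\bzero$ of Lemma~\ref{l_pyth} does not hold here, only the row-space orthogonality $(\bA_i-\bU_K\bU_K^T\bA_i)\bA_{i,\perp}^T=\bzero$ does --- and your trace computation (equivalently, applying Lemma~\ref{l_pyth} to the transposed matrices) repairs this cleanly.
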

\begin{proof}
The optimality of (\ref{eqn:opt1}) in Lemma~\ref{lem:restate} implies
\begin{eqnarray*}
\FNormS{\bA-\hat\bU_i\hat\bU_i^T\bA} &=&
\FNormS{\bA-\bU_K \left(\bU_K^T\bA\right)_i}\nonumber\\
&\leq& \FNormS{\bA-\bU_K \bU_K^T\bA_i} \nonumber\\
&=& \FNormS{\bA_i-\bU_K \bU_K^T\bA_i} + \FNormS{\bA_{i,\perp}}.
\end{eqnarray*}
The last equality follows from Lemma~\ref{l_pyth}.
\end{proof}

\subsection{Bounding the important part of the error}\label{s_aux2}
We bound the term in Lemma~\ref{l_aux2} over which we have control,
namely  $\FNormS{\bA_i-\bU_K \bU_K^T\bA_i}$.

As in Section~\ref{sxn:prooft2}, let $\mathcal{P}_q$ be the orthogonal projector onto $\mathcal{K}_q$.
For $\bPhi$ in (\ref{eqn:Phidef}) let
$\bPhi\bPhi^{\dagger}$ be the orthogonal projector onto $\range(\bPhi)$,
with $\range(\bPhi\bPhi^{\dagger})\subset\range(\mathcal{P}_q)$ due to (\ref{eqn:rangeK}).
The leads to the obvious bound
\begin{equation}\label{eqn:connectwithcheb}
\FNorm{\bA_i-\bU_K \bU_K^T\bA_i} = \FNorm{\bA_i-\mathcal{P}_q\bA_i} \leq \FNorm{\bA_i-\bPhi\bPhi^{\dagger} \bA_i}, \qquad 1\leq i\leq k.
\end{equation}
We don't stop here, though, but go further and pursue a bound in terms  of polynomials.
\begin{lemma}\label{lem:pd1}
Let $\phi(x)$ be a polynomial of degree $2q+1$ with odd powers only that satisfies
$\phi(\sigma_j)\geq \sigma_j$ for $1\leq j\leq  k$. Then
\begin{eqnarray*}\|\bA_i-\bU_K \bU_K^T\bA_i\|_F
\leq \|\bU_i\phi\left(\bSigma_i\right)-\bPhi \bPhi^{\dagger}\bU_i\phi\left(\bSigma_i\right)\|_F,
\qquad 1\leq i\leq k.
\end{eqnarray*}
\end{lemma}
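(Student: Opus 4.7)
The plan is to start from the bound (\ref{eqn:connectwithcheb}),
$$\|\bA_i-\bU_K\bU_K^T\bA_i\|_F\leq\|\bA_i-\bPhi\bPhi^{\dagger}\bA_i\|_F=\|(\bI-\bPhi\bPhi^{\dagger})\,\bU_i\bSigma_i\bV_i^T\|_F,$$
substitute the SVD-based factorization $\bA_i=\bU_i\bSigma_i\bV_i^T$ from~(\ref{e_Ai}), and then move from $\bSigma_i$ to $\phi(\bSigma_i)$ on the right using the hypothesis $\phi(\sigma_j)\geq\sigma_j$.

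First, I would strip the factor $\bV_i^T$. Since $\bV_i^T\bV_i=\bI_i$, the map $\bM\mapsto\bM\bV_i^T$ is a Frobenius isometry, so
$$\|(\bI-\bPhi\bPhi^{\dagger})\bU_i\bSigma_i\bV_i^T\|_F=\|(\bI-\bPhi\bPhi^{\dagger})\bU_i\bSigma_i\|_F.$$

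Next, set $\bM\equiv(\bI-\bPhi\bPhi^{\dagger})\bU_i\in\real^{m\times i}$ with columns $\bm_1,\dots,\bm_i$. Since $\bSigma_i=\diag(\sigma_1,\dots,\sigma_i)$ is diagonal, the Frobenius norm decomposes columnwise,
$$\|\bM\bSigma_i\|_F^2=\sum_{j=1}^{i}\sigma_j^{2}\,\|\bm_j\|_2^{2}.$$
The hypothesis $\phi(\sigma_j)\geq\sigma_j>0$ for $1\leq j\leq k\geq i$ gives $\phi(\sigma_j)^2\geq\sigma_j^2$, so
$$\sum_{j=1}^{i}\sigma_j^{2}\,\|\bm_j\|_2^{2}\leq\sum_{j=1}^{i}\phi(\sigma_j)^{2}\,\|\bm_j\|_2^{2}=\|\bM\,\phi(\bSigma_i)\|_F^{2}.$$
Taking square roots and unpacking $\bM$ yields
$$\|(\bI-\bPhi\bPhi^{\dagger})\bU_i\bSigma_i\|_F\leq\|(\bI-\bPhi\bPhi^{\dagger})\bU_i\phi(\bSigma_i)\|_F=\|\bU_i\phi(\bSigma_i)-\bPhi\bPhi^{\dagger}\bU_i\phi(\bSigma_i)\|_F,$$
and chaining this with the first displayed inequality gives the claimed bound.

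There is no serious obstacle here; the argument is essentially two lines of unitary-invariance plus a coordinate-wise monotonicity observation enabled by the crucial hypothesis $\phi(\sigma_j)\geq\sigma_j$. The only subtlety worth flagging is that $\phi(\sigma_j)\geq\sigma_j>0$ is needed (not merely $|\phi(\sigma_j)|\geq\sigma_j$) so that squaring preserves the inequality without sign issues, which is exactly the hypothesis of the lemma.
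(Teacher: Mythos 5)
Your proof is correct and follows essentially the same route as the paper's: start from (\ref{eqn:connectwithcheb}), drop $\bV_i^T$ by unitary invariance of the Frobenius norm, and compare column by column using $\phi(\sigma_j)\geq\sigma_j$. The only difference is cosmetic (you name the matrix $\bM$ and flag the sign issue explicitly), so nothing further is needed.
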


\begin{proof}
We use the abbreviation $\mathcal{P}_{\phi}^{\perp}\equiv \bI-\bPhi\bPhi^{\dagger}$,
to denote the orthogonal projector onto $\range(\bPhi)^{\perp}$.
From (\ref{e_Ai}), (\ref{eqn:connectwithcheb})
and the unitary invariance of the Frobenius norm follows
\begin{eqnarray*}\label{eqn:d1}
\|\bA_i-\bU_K \bU_K^T\bA_i\|_F &\leq &
\|\mathcal{P}_{\phi}^{\perp}\bA_i\|_F= \|\mathcal{P}_q^{\perp}\,\bU_i\bSigma_i\|_F,\qquad
1\leq i\leq k.
\end{eqnarray*}
Expressing the squared Frobenius norm as a sum of squared column norms,
and then applying the assumption $\sigma_j\leq \phi(\sigma_j)$ yields for $1\leq i\leq k$,
\begin{eqnarray*}
\FNormS{\mathcal{P}_{\phi}^{\perp}\bU_i\bSigma_i} &=&
\sum_{j=1}^i{\sigma_j^2\|\mathcal{P}_{\phi}^{\perp}\bu_j\|_2^2}\leq
\sum_{j=1}^i{\phi(\sigma_j)^2\|\mathcal{P}_{\phi}^{\perp}\bu_j\|_2^2}=
\FNormS{\mathcal{P}_{\phi}^{\perp}\bU_i\phi(\bSigma_i)}.
\end{eqnarray*}
\end{proof}
\subsection{From projections to least-squares residuals}
Now we are ready to apply the approach from Theorem~\ref{t_2} and view
the result of Lemma~\ref{lem:pd1} as a least squares residual.

\begin{lemma}\label{l_aux3}
Under the assumptions of Theorem~\ref{t_4},
$$\|\bU_i\phi\left(\bSigma_i\right)-\bPhi \bPhi^{\dagger}\bU_i\phi\left(\bSigma_i\right)\|_F\leq
\|\phi(\bSigma_{k,\perp})\|_2\,\>\|\bV_{k,\perp}^T\bX(\bV_{k}^T\bX)^{\dagger}\|_{F}.$$
\end{lemma}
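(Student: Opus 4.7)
The plan is to mirror the least-squares argument used in the proof of Theorem~\ref{t_2}, now applied to the matrix $\bU_i\phi(\bSigma_i)$ rather than $\bU_k$. First, I would recognize that $\bPhi\bPhi^{\dagger}$ is the orthogonal projector onto $\range(\bPhi)$, so by Lemma~\ref{lem:2norm} (Frobenius version) the quantity $\|\bU_i\phi(\bSigma_i) - \bPhi\bPhi^{\dagger}\bU_i\phi(\bSigma_i)\|_F$ equals the minimum residual $\min_{\bPsi}\|\bU_i\phi(\bSigma_i)-\bPhi\bPsi\|_F$. Substituting the suboptimal choice $\bPsi = \bPhi_k^{\dagger}\bU_i\phi(\bSigma_i)$ yields the upper bound
$$\|\bU_i\phi(\bSigma_i) - \bPhi\bPhi^{\dagger}\bU_i\phi(\bSigma_i)\|_F \leq \|\bU_i\phi(\bSigma_i) - \bPhi\,\bPhi_k^{\dagger}\bU_i\phi(\bSigma_i)\|_F.$$

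Next I would split $\bPhi = \bPhi_k + \bPhi_{k,\perp}$ using (\ref{e_phidecomp}) and invoke (\ref{e_proj}), which gives $\bPhi_k\bPhi_k^{\dagger} = \bU_k\bU_k^T$. Because the columns of $\bU_i$ are the first $i$ columns of $\bU_k$, we have $\bU_k\bU_k^T\bU_i\phi(\bSigma_i) = \bU_i\phi(\bSigma_i)$, and the target piece cancels exactly:
$$\bU_i\phi(\bSigma_i) - \bPhi\,\bPhi_k^{\dagger}\bU_i\phi(\bSigma_i) = -\bPhi_{k,\perp}\bPhi_k^{\dagger}\bU_i\phi(\bSigma_i).$$

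Then I would evaluate $\bPhi_k^{\dagger}\bU_i\phi(\bSigma_i)$ in closed form. Using $\bPhi_k^{\dagger} = (\bV_k^T\bX)^{\dagger}\phi(\bSigma_k)^{-1}\bU_k^T$ from (\ref{e_proj}), the product $\bU_k^T\bU_i\phi(\bSigma_i)$ equals $\begin{pmatrix}\phi(\bSigma_i) \\ \bzero\end{pmatrix}$, after which $\phi(\bSigma_k)^{-1}$ (block diagonal on the splitting at $i$) reduces this further to $\begin{pmatrix}\bI_i\\ \bzero\end{pmatrix}$. Combining with the explicit form of $\bPhi_{k,\perp}$ yields
$$\bPhi_{k,\perp}\bPhi_k^{\dagger}\bU_i\phi(\bSigma_i) = \bU_{k,\perp}\,\phi(\bSigma_{k,\perp})\,\bV_{k,\perp}^T\bX\,(\bV_k^T\bX)^{\dagger}\begin{pmatrix}\bI_i\\ \bzero\end{pmatrix}.$$

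Finally, applying unitary invariance (to discard $\bU_{k,\perp}$) and strong submultiplicativity of the Frobenius norm twice, I get
$$\|\bPhi_{k,\perp}\bPhi_k^{\dagger}\bU_i\phi(\bSigma_i)\|_F \leq \|\phi(\bSigma_{k,\perp})\|_2\,\|\bV_{k,\perp}^T\bX\,(\bV_k^T\bX)^{\dagger}\|_F\,\left\|\begin{pmatrix}\bI_i\\ \bzero\end{pmatrix}\right\|_2,$$
and the last factor is $1$, giving the claim. There is no serious obstacle here; the only point requiring care is the block bookkeeping that makes $\phi(\bSigma_k)^{-1}$ collapse against $\phi(\bSigma_i)$, which is what allows the polynomial on the target side to disappear and leaves only $\phi(\bSigma_{k,\perp})$ in the bound.
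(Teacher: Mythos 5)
Your argument is correct and is essentially the paper's own proof: the same least-squares step (minimality of the residual with the suboptimal choice $\bPhi_k^{\dagger}\bU_i\phi(\bSigma_i)$, which the paper invokes by referring back to (\ref{e_ls1})), the same cancellation via $\bPhi_k\bPhi_k^{\dagger}=\bU_k\bU_k^T$, the same block identity $\phi(\bSigma_k)^{-1}\bU_k^T\bU_i\phi(\bSigma_i)=\bigl(\begin{smallmatrix}\bI_i\\ \bzero\end{smallmatrix}\bigr)$, and the same use of unitary invariance plus submultiplicativity. You merely write out explicitly the steps the paper compresses, so there is nothing to correct.
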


\begin{proof}
Based on the orthogonality $(\bI-\bU_k \bU_k^T)\bU_i=\bzero$ for $1\leq i\leq  k$,
we can deduce as in (\ref{e_ls1}) that
\begin{eqnarray*}
\FNorm{(\bI-\bPhi \bPhi^{\dagger})\,\bU_i\phi(\bSigma_i)}\leq
\|\bPhi_{k,\perp}\bPhi_k^{\dagger}\, \bU_i\phi(\bSigma_i)\|_{F}.
\end{eqnarray*}
The expressions for $\bPhi_{k,\perp}$ in~(\ref{e_phidecomp}) and  $\bPhi_k^{\dagger}$
in~(\ref{e_proj}), along with the strong submultiplicativity in Section~\ref{sxn:la} yield
\begin{eqnarray*}
\|\bPhi_{k,\perp}\bPhi_k^{\dagger}\bU_i\phi(\bSigma_i)\|_{F}&=&
\|\bU_{k,\perp}\,\phi(\bSigma_{k,\perp})\,\bV_{k,\perp}^T\bX(\bV_{k}^T\bX)^{\dagger}\,
\phi(\bSigma_k)^{-1}\bU_k^T\bU_i\phi(\bSigma_i)\|_{F}\\
&\leq &\|\phi(\bSigma_{k,\perp})\|_2\,\>\|\bV_{k,\perp}^T\bX(\bV_{k}^T\bX)^{\dagger}\|_{F}.
\end{eqnarray*}
The above inequality is obtained by noting that for $i=k$ we have
$\phi(\bSigma_k)^{-1}\bU_k^T\bU_i\phi\left(\bSigma_i\right)=\bI_k$, while for $1\leq i<k$,
$$\phi(\bSigma_k)^{-1}\bU_k^T\bU_i\phi\left(\bSigma_i\right)=
\begin{pmatrix} \bI_i\\ \bzero_{(k-i)\times i}\end{pmatrix}.$$

\end{proof}
\subsection{Concluding the proof of Theorem~\ref{t_4}}
We prove each of the three inequalities in turn. Recall that
$$\Delta \equiv \|\phi(\bSigma_{k,\perp})\|_2\>
\|\bV_{k,\perp}^T\bX\,(\bV_{k}^T\bX)^{\dagger}\|_{F}.$$

\subsubsection*{Proof of (\ref{eqn:th1})}
Combining Lemmas \ref{l_aux2}, \ref{lem:pd1}, and~\ref{l_aux3}
and recognizing the expression for $\Delta$ yields
\begin{eqnarray}\label{e_d3}
\|\bA-\hat{\bU}_i\hat{\bU}_i^T\|_F^2&\leq& \|\bA_{i,\perp}\|_F^2+
\|\phi(\bSigma_{k,\perp})\|_2^2\,\>\|\bV_{k,\perp}^T\bX(\bV_{k}^T\bX)^{\dagger}\|_{F}^2,
\nonumber\\
&=&\|\bA_{i,\perp}\|_F^2+ \Delta^2, \qquad 1\leq i\leq k.
\end{eqnarray}
Inserting $\|\bA_{i,\perp}\|_F = \|\bA-\bA_i\|_F$ gives
\begin{equation}\label{eqn:d3}
\FNormS{\bA-\hat\bU_i\hat\bU_i^T\bA} \leq \FNormS{\bA-\bA_{i}}+\Delta^2, \qquad
1\leq i\leq k.
\end{equation}
Taking advantage of the inequality below for scalars $\alpha, \beta\geq 0$,
\begin{eqnarray}\label{e_sqrt}
\sqrt{\alpha^2+\beta^2}\leq \sqrt{\alpha^2+\beta^2+2\alpha\beta}=
\sqrt{(\alpha+\beta)^2}=\alpha + \beta,
\end{eqnarray}
gives the weaker, but square-free bound (\ref{eqn:th1}).

\subsubsection*{Proof of (\ref{eqn:th2})}
We use \cite[Theorem 3.4]{G2014}, which shows that an additive error bound for a
low-rank approximation in the Frobenius norm implies the same in the two norm.

\begin{lemma}[Theorem 3.4 in \cite{G2014}]\label{l_gu}
Given  $\bA, \tilde{\bA} \in \real^{m \times n}$ with
$\rank(\tilde{\bA})=k<\rank(\bA)$. If
$\FNormS{\bA-\tilde{\bA}}\leq \FNormS{\bA-\bA_k}+\delta$, then
$$\TNormS{\bA-\tilde{\bA}}\leq \TNormS{\bA-\bA_k}+\delta.$$
\end{lemma}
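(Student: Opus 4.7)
The plan is to compare the singular values of $\bA-\tilde{\bA}$ directly with those of $\bA$, using only the rank constraint $\rank(\tilde{\bA})=k$. First I would invoke Weyl's inequality for singular values: writing $\bA=\tilde{\bA}+(\bA-\tilde{\bA})$ and applying $\sigma_{i+j-1}(\bM+\bN)\leq \sigma_i(\bM)+\sigma_j(\bN)$ with $i=k+1$, the vanishing $\sigma_{k+1}(\tilde{\bA})=0$ would yield the key singular value interlacing
$$\sigma_j(\bA-\tilde{\bA})\geq \sigma_{k+j}(\bA),\qquad j\geq 1.$$

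Next, I would split off the leading singular value in the Frobenius norm expansion,
$$\FNormS{\bA-\tilde{\bA}}=\sigma_1^2(\bA-\tilde{\bA})+\sum_{j\geq 2}\sigma_j^2(\bA-\tilde{\bA}),$$
and apply the interlacing inequality term-by-term to the tail. Since $\FNormS{\bA-\bA_k}=\sum_{j\geq k+1}\sigma_j^2(\bA)$ and $\TNormS{\bA-\bA_k}=\sigma_{k+1}^2(\bA)$ by the Eckart--Young theorem, the tail is bounded below by $\FNormS{\bA-\bA_k}-\TNormS{\bA-\bA_k}$.

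Finally, I would combine this lower bound with the hypothesis $\FNormS{\bA-\tilde{\bA}}\leq \FNormS{\bA-\bA_k}+\delta$ and the identity $\TNormS{\bA-\tilde{\bA}}=\sigma_1^2(\bA-\tilde{\bA})$ to deduce
$$\TNormS{\bA-\tilde{\bA}}\leq \FNormS{\bA-\bA_k}+\delta-\bigl(\FNormS{\bA-\bA_k}-\TNormS{\bA-\bA_k}\bigr)=\TNormS{\bA-\bA_k}+\delta,$$
which is exactly the claim. The main obstacle is the index bookkeeping in Weyl's inequality; once the interlacing $\sigma_j(\bA-\tilde{\bA})\geq\sigma_{k+j}(\bA)$ is in hand, the remainder is a one-line algebraic rearrangement. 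A Mirsky-based route would also work in principle, but the Weyl route is more direct because the rank-$k$ hypothesis on $\tilde{\bA}$ is used in a single inequality.
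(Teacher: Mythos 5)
Your argument is correct: Weyl's inequality with $\sigma_{k+1}(\tilde{\bA})=0$ gives the interlacing $\sigma_j(\bA-\tilde{\bA})\geq\sigma_{k+j}(\bA)$, the tail of the Frobenius expansion is then at least $\FNormS{\bA-\bA_k}-\TNormS{\bA-\bA_k}$, and subtracting this from the hypothesis yields the two-norm bound; note you only need $\rank(\tilde{\bA})\leq k$. The paper itself offers no proof of this lemma --- it is imported verbatim as Theorem 3.4 of \cite{G2014} --- and your Weyl-interlacing route is essentially the standard argument given there, so you have in effect supplied the missing proof rather than deviated from the paper's.
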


Apply Lemma~\ref{l_gu} to (\ref{eqn:d3}), to get
$\TNormS{\bA-\hat\bU_{i}\hat\bU_{i}^T\bA} \leq \TNormS{\bA-\bA_i}+\Delta^2$,
and take square roots based on (\ref{e_sqrt}).

\subsubsection*{Proof of (\ref{eqn:th3})}
The upper bounds follow from the \textit{minimax theorem for singular values}
\cite[Theorem 8.6.1]{GovL13}.

This leaves the lower bounds. Recall the
non-increasing ordering of the singular values $\sigma_1\geq \cdots \geq \sigma_k$,
and the fact that $\hat{\bU}_i$ in (\ref{e_hatui}) has orthonormal columns.

\paragraph{Case $i=1$} Apply Lemma~\ref{l_pyth} to (\ref{e_d3})
\begin{eqnarray*}
\FNormS{\bA}-\FNormS{\hat{\bu}_{1}^T\bA} = \FNormS{\bA-\hat{\bu}_{1} \hat{\bu}_{1}^T\bA}
\leq \FNormS{\bA_{1,\perp}} + \Delta^2.
\end{eqnarray*}
From $\FNormS{\bA}-\FNormS{\bA_{1,\perp}} = \sigma_1^2$ follows
$\sigma_1^2\leq \FNormS{\bA-\hat{\bu}_{1} \hat{\bu}_{1}^T\bA} +\Delta^2$.
Taking square roots based on (\ref{e_sqrt}) proves (\ref{eqn:th3}) for $i=1$.

\paragraph{Case $2\leq i\leq k$}
Among all matrices of rank $i-1$, the matrix $\bA_{i-1}$ is closest to~$\bA$ in the Frobenius norm.
Hence
$$\|\bA_{i-1,\perp}\|_F=
\|\bA-\bA_{i-1}\|_F\leq \|\bA-\hat{\bU}_{i-1}\hat{\bU}_{i-1}^T\bA\|_F.$$
The above, together  with the outer product representation
$\hat{\bU}_{i} \hat{\bU}_{i}^T = \hat{\bU}_{i-1} \hat{\bU}_{i-1}^T + \hat{\bu}_{i}\hat{\bu}_{i}^T$,
Lemma~\ref{l_pyth} and (\ref{e_d3}) gives
\begin{eqnarray*}
\|\bA_{i-1,\perp}\|_F^2- \FNormS{\hat{\bu}_{i}\hat{\bu}_{i}^T\bA} &\leq&
\FNormS{\bA-\hat\bU_{i-1} \hat\bU_{i-1}^T\bA} - \FNormS{\hat{\bu}_{i}\hat{\bu}_{i}^T\bA}\\
&=& \FNormS{\bA-\hat\bU_{i} \hat\bU_{i}^T\bA}
\leq \FNormS{\bA_{i,\perp}} +\Delta^2.
\end{eqnarray*}
At last, applying $\FNormS{\bA_{i-1,\perp}}-\FNormS{\bA_{i,\perp}} = \sigma_{i}^2$, and taking
square roots based on (\ref{e_sqrt})
proves (\ref{eqn:th3}) for $2\leq i\leq k$.

This concludes the proof for general $\bX$. The proof for the special case where $\bX$ has
orthonormal columns follows from Lemma~\ref{l_2}.

\section{Conclusions and open problems}\label{s_conc}
Motivated by the emergence of randomized Krylov space methods
for low-rank approximations \cite{MM2015,Wang2015},
we presented a "proof of concept", that is, structural results for the accuracy of
approximate dominant subspaces.

Several open problems arise from our work:
\begin{enumerate}
\item Can we better understand and close the disconnect between low-rank approximations
and dominant subspace computations?

A singular value gap is a must for dominant subspace computations, if only to ensure wellposedness
of the mathematical problem. In contrast, low-rank approximations can do without a gap for
special starting guesses $\bX$ \cite{MM2015}. This comes at the detriment of accuracy, though.
Bolstered by a gap, subspace accuracy exhibits the logarithmic dependence (\ref{eqn:valq}) on~$\epsilon$,
while, without a gap, the accuracy of a low-rank approximation has only polynomial dependence on~$\epsilon$.
To the best of our knowledge, gap-independent results are not known for arbitrary~$\bX$.
As the analysis \cite{MM2015} only exploits the fact that $\bX$ can give an approximation
that is polynomially close to optimal in the Frobenius norm, it could potentially be extended
to a variety of random starting guesses.

\item Is it possible to relax  the full-rank assumption for $\bV_k^T\bX$?\\
Our proofs require $\rank(\bV_k^T\bX)=k$, which forces starting guesses
to have at least $s\geq k$ columns. Thus, even in the presence of the requisite singular value gaps,
our proofs collapse for starting guesses that consist of a single column.

\item Are our bounds tight enough to be informative, and how relevant are they for
practical numerical implementations of block Krylov methods?

\end{enumerate}
\section{Acknowledgments} We thank Mark Embree for many useful discussions.

\appendix
\section{More general proof of Theorem~\ref{t_2}}\label{s_prooft2a}
The proof below applies to starting guesses $\bX$ with linearly independent columns and
consists of several steps.

\paragraph{Preparing $\bX$}
Since subspace  angles are defined by matrices with orthonormal columns, we perform
a thin QR decomposition $\bX=\bQ\bR$,  where $\bQ\in\real^{n\times s}$ has orthonormal
columns, and $\bR\in\real^{s\times s}$ is nonsingular. Then $\range(\bQ)=\range(\bX)$.

The expression for $\bPhi$ contains a basis transformation on $\bX$ with the orthogonal
matrix~$\bV$, resulting in a $n\times s$ matrix
\begin{eqnarray*}
\bV^T\bQ=\begin{pmatrix}\bV_k^T\bQ\\ \bV_{k,\perp}^T\bQ\end{pmatrix}
=\begin{pmatrix}\bQ_k\\ \bQ_{k,\perp}\end{pmatrix}
\end{eqnarray*}
with orthonormal columns. It remains to account for  $\bR$:
\begin{eqnarray*}
\bV^T\bX=(\bV^T\bQ)\,\bR=\begin{pmatrix}\bQ_k^T\bR\\ \bQ_{k,\perp}^T\bR\end{pmatrix}
=\begin{pmatrix}\bX_k\\ \bX_{k,\perp}\end{pmatrix}.
\end{eqnarray*}
By assumption,
$k=\rank(\bV_k^T\bQ)=\rank(\bX_k)=\rank(\bQ_k)$, so that
$\bX_k\in\real^{k\times s}$ and $\bQ_k\in\real^{k\times S}$ have full row rank.
In particular,
\begin{eqnarray}\label{ea_Xk}
\bQ_k\bQ_k^{\dagger}=\bI_k, \qquad
\bQ_k^{\dagger}=\bQ_k^T(\bQ_k\bQ_k^T)^{-1}.
\end{eqnarray}
For $\bX_k$, though, we forego the Moore-Penrose inverse,
and choose instead a $(1,2,3)$ inverse \cite[Definition 6.2.4]{CaM79}.
The matrix
$$\bX_k^{+}\equiv \bR^{-1}\bQ_k^{\dagger}$$
is a right inverse, $\bX_k\bX_k^{+}=\bI_k$, and
satisfies three of the four Moore-Penrose conditions,
$$\bX_k\,\bX_k^{+}\,\bX_k=\bX_k, \qquad \bX_k^{+}\,\bX_k\,\bX_k^{+}=\bX_k^{+}, \qquad
(\bX_k\,\bX_k^{+})^T=(\bX_k\,\bX_k^{+}).$$

\paragraph{Viewing the sine as a least squares residual}
Let $\mathcal{P}_q$ be the orthogonal projector onto the Krylov space $\mathcal{K}_q$,
and $\bPhi\bPhi^{\dagger}$ the orthogonal projector onto $\range(\bPhi)$.
From $\range(\bPhi)\subset\mathcal{K}_q$
follows $\range(\bPhi\bPhi^{\dagger})\subset\range(\mathcal{P}_q)$, hence (\ref{e_dist}) implies
\begin{eqnarray}\label{ea_sin}
\|\sin{\bTheta(\mathcal{K}_q,\bU_k)}\|_{2,F}&=&\|(\bI-\mathcal{P}_q)\,\bU_k\|_{2,F}
\|(\bI-\bPhi\bPhi^{\dagger})\,\bU_k\|_{2,F}\\
&=& \|\sin{\bTheta(\bPhi,\bU_k)}\|_{2,F}.\nonumber
\end{eqnarray}
Lemma~\ref{lem:2norm} shows that
$\|(\bI-\bPhi\bPhi^{\dagger})\,\bU_k\|_{2,F}$ is the residual
of the least squares problem
$$\|(\bI-\bPhi\bPhi^{\dagger})\,\bU_k\|_{2,F}=\min_{\Psi}{\|\bU_k-\bPhi\bPsi\|_{2,F}}
=\|\bU_k-\bPhi\bPsi_{opt}\|_{2,F},$$
where $\bPsi_{opt}=\bPhi^{\dagger}\bU_k$ is a least squares solution.

\paragraph{Focussing on the target space}
Decompose $\bPhi$ into the target component $\range(\bU_k)$ and the complementary subspace,
$\bPhi=\bPhi_k+\bPhi_{k,\perp}$, where
\begin{eqnarray}\label{ea_phidecomp}
\bPhi_k\equiv\bU_k\phi(\bSigma_k)\bX_k, \qquad
\bPhi_{k,\perp}\equiv \bU_{k,\perp}\phi(\bSigma_{k,\perp})\bX_{k,\perp}.
\end{eqnarray}
It is easy to verify that
\begin{eqnarray}\label{ea_phikinv}
\bPhi_k^{+}\equiv \bX_k^{+} \phi(\bSigma_k)^{-1}\bU_k^T
\end{eqnarray}
satisfies the conditions of a $(1,2,3)$ inverse.
The minimality of the least squares residual implies
\begin{eqnarray*}\label{ea_ls}
\|(\bI-\bPhi\bPhi^{\dagger})\,\bU_k\|_{2,F}&=&\|\bU_k-\bPhi\,(\bPhi^{\dagger}\bU_k)\|_{2,F}\nonumber\\
&\leq&  \|\bU_k-\bPhi\,(\bPhi_k^{+}\bU_k)\|_{2,F}
=\|(\bI-\bPhi\bPhi_k^{+})\,\bU_k\|_{2,F}.
\end{eqnarray*}
Now replace the other instance of $\bPhi$ by (\ref{ea_phidecomp}),
\begin{eqnarray}\label{ea_ls1}
\|(\bI-\bPhi\bPhi^{+})\,\bU_k\|_{2,F}&\leq &\|(\bI-\bPhi\bPhi_k^{+})\,\bU_k\|_{2,F}\nonumber\\
&=&\|(\bI-\bPhi_k\bPhi_k^{+})\bU_k-\bPhi_{k,\perp}\bPhi_k^{+}\bU_k\|_{2,F}.
\end{eqnarray}
Since a $(1,2,3)$ inverse is an orthogonal projector \cite[(2.2.13) in Section 2.2.1]{Bjo15}, it follows that
$\bPhi_k\bPhi_k^{+}=\bU_k\bU_k^T$
is the orthogonal projector onto the target space $\range(\bU_k)$.
This observation simplifies (\ref{ea_ls1}),
\begin{eqnarray*}
\|(\bI-\bPhi_k\bPhi_k^{+})\bU_k-\bPhi_{k,\perp}\bPhi_k^{+}\bU_k\|_{2,F}
&=&\|(\bI-\bU_k\bU_k^{T})\bU_k-\bPhi_{k,\perp}\bPhi_k^{+}\bU_k\|_{2,F}\\
&=&\|\bPhi_{k,\perp}\bPhi_k^{+}\bU_k\|_{2,F}.
\end{eqnarray*}

\paragraph{Summary so far}
Combining the above with (\ref{ea_sin}) and (\ref{ea_ls1}) gives
\begin{eqnarray*}
\|\sin{\bTheta(\mathcal{K}_q,\bU_k)}\|_{2,F}\leq
\|\sin{\bTheta(\bPhi,\bU_k)}\|_{2,F}\leq \|\bPhi_{k,\perp}\bPhi_k^{+}\bU_k\|_{2,F}.
\end{eqnarray*}

\paragraph{Extracting the polynomials}
The expressions for $\bPhi_{k,\perp}$ in (\ref{ea_phidecomp}) and
$\bPhi_k^{+}$ in (\ref{ea_phikinv}), and submultiplicativity (Section~\ref{sxn:la}) yield
\begin{eqnarray*}
\|\bPhi_{k,\perp}\bPhi_k^{+}\bU_k\|_{2,F}&=&
\|\phi(\bSigma_{k,\perp})\,\bX_{k,\perp}\bX_k^{+}\,\phi(\bSigma_k)^{-1}\|_{2,F}\\
&\leq &\|\phi(\bSigma_{k,\perp})\|_2\,\|\phi(\bSigma_k)^{-1}\|_2\>\|\bX_{k,\perp}\bX_k^{+}\|_{2,F}.
\end{eqnarray*}
Combining the previous two sets of inequalities gives
\begin{eqnarray*}
\|\sin{\bTheta(\mathcal{K}_q,\bU_k)}\|_{2,F}\leq
\|\phi(\bSigma_{k,\perp})\|_2\,\|\phi(\bSigma_k)^{-1}\|_2\>\|\bX_{k,\perp}\bX_k^{+}\|_{2,F}.
\end{eqnarray*}
We chose the $(1,2,3)$ inverse so that $\bR$ cancels out,
$\|\bX_{k,\perp}\bX_k^{+}\|_{2,F}=\|\bQ_{k,\perp}\bQ_k^{\dagger}\|_{2,F}$, and
\begin{eqnarray*}
\|\sin{\bTheta(\mathcal{K}_q,\bU_k)}\|_{2,F}\leq
\|\phi(\bSigma_{k,\perp})\|_2\,\|\phi(\bSigma_k)^{-1}\|_2\>\|\bQ_{k,\perp}\bQ_k^{\dagger}\|_{2,F}.
\end{eqnarray*}
At last, Lemma~\ref{l_2} and $\range(\bQ)=\range(\bX)$ imply
$$\|\bQ_{k,\perp}\bQ_k^{\dagger}\|_{2,F}=\|\tan{\bTheta(\bQ,\bV_k)}\|_{2,F}
=\|\tan{\bTheta(\bX,\bV_k)}\|_{2,F}.$$

\bibliography{BlockKrylov}

\end{document}